% !TeX spellcheck = en_US
%\input{tcilatex}

\documentclass[a4paper]{article}
%%%%%%%%%%%%%%%%%%%%%%%%%%%%%%%%%%%%%%%%%%%%%%%%%%%%%%%%%%%%%%%%%%%%%%%%%%%%%%%%%%%%%%%%%%%%%%%%%%%%%%%%%%%%%%%%%%%%%%%%%%%%%%%%%%%%%%%%%%%%%%%%%%%%%%%%%%%%%%%%%%%%%%%%%%%%%%%%%%%%%%%%%%%%%%%%%%%%%%%%%%%%%%%%%%%%%%%%%%%%%%%%%%%%%%%%%%%%%%%%%%%%%%%%%%%%
%\usepackage{amsfonts}
\usepackage{amsmath}
\usepackage{lmodern}
\usepackage{amsmath}
\usepackage{amssymb}
\usepackage{mathrsfs}
\usepackage{graphicx}

\setcounter{MaxMatrixCols}{10}
%TCIDATA{OutputFilter=LATEX.DLL}
%TCIDATA{Version=5.00.0.2606}
%TCIDATA{<META NAME="SaveForMode" CONTENT="1">}
%TCIDATA{BibliographyScheme=Manual}
%TCIDATA{Created=Saturday, June 17, 2017 10:04:37}
%TCIDATA{LastRevised=Wednesday, July 19, 2017 13:31:41}
%TCIDATA{<META NAME="GraphicsSave" CONTENT="32">}
%TCIDATA{<META NAME="DocumentShell" CONTENT="Standard LaTeX\Standard LaTeX Article">}
%TCIDATA{Language=American English}
%TCIDATA{CSTFile=40 LaTeX article.cst}

\newtheorem{theorem}{Theorem}

\newtheorem{corollary}[theorem]{Corollary}

\newtheorem{definition}[theorem]{Definition}
\newtheorem{example}[theorem]{Example}

\newtheorem{lemma}[theorem]{Lemma}

\newtheorem{remark}[theorem]{Remark}

\newenvironment{proof}[1][Proof]{\noindent\textbf{#1.} }{\ \rule{0.5em}{0.5em}}
\newcommand{\R}{{\mathbb R}}

\newcommand*{\qed}{\hfill\ensuremath{\blacksquare}}%

\begin{document}
\title{Existence and uniqueness of positive solutions for nonlinear fractional mixed problems.}
\date{}
\author{Alberto Cabada$^1$ and Wanassi Om Kalthoum$^2$\\
	$^1$Departamento de Estat\'istica, An\'alise Matem\'atica e Optimizaci\'on\\
	Instituto de Matem\'aticas, Facultade de Matem\'aticas, \\
	Universidade de Santiago de Compostela, Spain.\\
	alberto.cabada@usc.es\\
$^2$Department of Mathematics\\University of Monastir, Tunisia.\\kalthoum.wannassi@gmail.com}
\maketitle
\begin{abstract}
This paper is devoted to study the existence and uniqueness of solutions of a  one parameter family of nonlinear fractional differential equation with mixed boundary value conditions. 
Riemann-Liouville fractional derivative is considered. An exhaustive study of the sign of the related Green's function is done.

Under suitable assumptions on the asymptotic behavior of the nonlinear  part of the equation at zero and at infinity, and by application of the fixed theory of compact operators defined in suitable cones, it is proved the existence of at least one solution of the considered problem. Moreover it is developed the method of lower and upper solutions and it is deduced the existence of solutions by a combination of both techniques. In some particular situations, the Banach contraction principle is used to ensure the uniqueness of solutions.
\end{abstract}

{\bf AMS Subject Classifications: } 	26A33, 34A08, 34B27

{\bf Key Words:}  Fractional Equations, Green's Functions, Lower and Upper Solutions, Fixed Point Theorems
\section{Introduction}

Fractional calculus is a very well known tool in the study of both pure and theoretical mathematical
discipline. In last decades, it has had a significant growth since this discipline has been gained presence because of its practical applications. The main difference of this kind of calculus is that it takes into account the values of the considered functions on previous instants to the one where it is studied. In particular, to evaluate the fractional derivative of a function on a given value $t$, it is necessary to know its definition on the whole  interval $[0,t)$. This property make the equations with this kind of derivatives specially suitable for problems ``with memory'' (see \cite{M.} and references therein) in which, on the contrary to the Ordinary Differential Equations, the response of the system is not immediate. So, in current research, fractional differential equations have arisen in mathematical models of
systems and processes in various fields such as, among others, aerodynamics, acoustics, robotics, electromagnetism, signal processing, mechanics,  control theory,  population dynamics or finance. Classical and recent results may be found on the monographs \cite{butzer,gorenflo,ref1,mainardi,muler,oldham,podlubny,ref8} and references therein.

In this article, we discuss the existence of solutions of the following nonlinear fractional differential equation with mixed boundary conditions
\begin{eqnarray}
\begin{cases}
D^{\alpha} u(t)-\lambda u(t)+ f(t,t^{2-\alpha}u(t))=0 , \quad t \in I:= [0,1],\\
\displaystyle \lim_{t\rightarrow 0^+} t^{2-\alpha}u(t)=u'(1)=0,
\end{cases} \label{pr}
\end{eqnarray}
where $1<\alpha \leq 2$, $\lambda \in \mathbb{R}$, $D^{\alpha}$ is the Riemann-Liouville fractional derivative and $f$ is a continuous function. 

We look for solutions $u:I \to \R$ such that function $t^{2-\alpha}\, u(t) \in C^1(I)$. Notice that, as a direct consequence, we deduce that, in particular, $u \in C^1((0,1])$. Moreover, it may be discontinuous at $t=0$.

Many results in this direction have been obtained in the literature for second order Ordinary Differential Equations. We  point out that for the, so-called, Hill's equation, $u''(t)+a(t)\, u(t)+ f(t,u(t))=0$, there is a huge bibliography where the existence of solutions is obtained. Many of them are on the basis  of the oscillation properties related to the linear part ot the equation. One may see, for instance, the monographs \cite{cacidsomoza,coppel, magnus} and the recent publications \cite{cacid1, torres1, zhang, zhanli}. In all of them the oscillation properties of the possible solutions are fundamental to deduce the existence of positive solutions of the considered problems.

In our case, due to the definition of the Riemann-Liouville fractional derivative and the lack of regularity of the solutions we are looking for, the arguments used on those references are not suitable for Problem \eqref{pr}. So, we will center our efforts in the construction of the Green's function and in to define operators in suitable spaces where to save the lack of the regularity of the obtained integral kernel.

Related problems are considered in several papers by means of the construction of the integral kernel of the considered operator. For instance, in \cite{ghanmi} the following problem is considered:
\begin{eqnarray*}
\begin{cases}
D^{\alpha} u(t)= f(t,u(t))=0 , \quad t \in I,\\
u(0)=\beta\,u(1)-\gamma\,u(\eta)=0,
\end{cases} 
\end{eqnarray*}
for $1<\alpha \leq 2$, $\beta$, $\gamma$, $\eta >0$ such that $\beta-2\, \gamma\,\eta^{\alpha-1}>0$.  On the paper, it is proved the existence of solution for a nonnegative and bounded function $f$ that satisfies some Lipschitz condition.

In \cite{ali}, it is considered the problem
\begin{eqnarray*}
\begin{cases}
D^{\alpha} u(t)+ f(t,u(t))=0 , \quad t \in I,\\
u(0)=u'(0)=u''(0)=u''(1)=0.
\end{cases} 
\end{eqnarray*}
In this case for $\alpha \in (3,4]$. There, the authors proved the validity of the monotone iterative techniques and  deduce some kind of stability for the obtained solutions.

In \cite{benchehen} it is studied the problem (with $\alpha \in (2,3]$)
\begin{eqnarray*}
	\begin{cases}
		D^{\alpha} u(t)+ f(t,u(t))=0 , \quad t \in I,\\
		u(0)=u'(0)=u'(1)=0.
	\end{cases} 
\end{eqnarray*}
The existence of solutions is deduced, for non negative Carath\'edory functions via degree theory.

On \cite{ref9} the Positiveness of the Green's function related to the linear part of the Dirichlet boundary problem

\begin{eqnarray*}
\begin{cases}
-D^{\alpha} u(t)+\lambda u(t)= f(t,u(t)) , \quad t \in I,\\
u(0)=u(1)=0,
\end{cases} 
\end{eqnarray*}
with $1<\alpha \leq 2$, $\lambda >0$, $D^{\alpha}$ and, once again, being the continuity of $u$  at $t=0$  allowed.

It is important to point out that in all the previous references the regularity imposed to the possible solutions imply their continuity at $t=0$.

Our approach is on the line of reference \cite{ref2}, where the following Dirichlet boundary problem is considered: 

\begin{eqnarray}
\begin{cases}
D^{\alpha} u(t)-\lambda u(t)+ f(t,t^{2-\alpha}u(t))=0 , \quad t \in I:= [0,1],\\
\displaystyle \lim_{t\rightarrow 0^+} t^{2-\alpha}u(t)=u(1)=0,
\end{cases} \label{p-Dir}
\end{eqnarray}
with $1<\alpha \leq 2$, $\lambda \in \mathbb{R}$, $f$ a continuous function and $t^{2-\alpha}\, u(t) \in C^1(I)$.

In this paper we will make an spectral analysis for  equation \eqref{pr} and give a comparison with the spectrum of Problem \eqref{p-Dir}, obtained in \cite{ref2}. So we need to combine the results for both problems in order to ensure the constant sign of the obtained Green's function. So, under suitable assumptions on the asymptotic behavior of the non negative nonlinear function $f(t,x)$ at $t=0$ and $t=+\infty$, and by application of the fixed theory of compact operators defined in suitable cones, it is proved the existence of at least one solution of the considered problem. Moreover, when the nonlinear is not necessarily of a constant sign, it is developed the method of lower and upper solutions and it is deduced the existence of solutions by a combination of both techniques. In some particular situations, the Banach contraction principle is used to ensure the uniqueness of the solutions of the considered problem.

The main tool used consists on the construction of the Green's function related to the linear problem
\begin{eqnarray}
\begin{cases}
D^{\alpha} u(t)-\lambda u(t)+ y(t)=0 , \quad t \in I,\\
\displaystyle \lim_{t\rightarrow 0^+} t^{2-\alpha}u(t)=u'(1)=0.
\end{cases} \label{pr1}
\end{eqnarray}

Once we have such expression, it is obtained the exact interval of the parameter $\lambda$ for which such function is positive on its square of definition. To this end, we make a spectral analysis of the linear operator in a suitable space.

The paper is scheduled as follows: In Section 2 are introduced some preliminary results that will allow us to obtain, in Section 3, the Green's function related to problem \eqref{pr1}. Next two sections are devoted to deduce the existence of solutions of the nonlinear problem \eqref{pr}. Such results are deduced from Fixed Point Theorems in cones. Section 6 is devoted to prove the validity of the method of lower and upper solutions and in last section some examples are given.

\section{Preliminaries}
In this section, we present some necessary definitions from fractional calculus. And we give some theorems that will be used to prove our results in next sections.
\begin{definition}\cite{ref1}
\label{def1}
\textrm{The Riemann-Liouville fractional integral of order }$%
\alpha >0\ $\textrm{for a measurable function }$f:(0,+\infty )\rightarrow 
%TCIMACRO{\U{211d} }%
%BeginExpansion
\mathbb{R}
%EndExpansion
$\textrm{\ is defined as }%
\begin{equation*}
I^{\alpha }f(t)=\frac{1}{\Gamma (\alpha )}\int_{0}^{t}(t-s)^{\alpha
-1}f(s)ds,\ t>0,
\end{equation*}%
\textrm{where }$\Gamma $\textrm{\ is the Euler Gamma function, provided that
the right-hand side is pointwise defined on }$(0,+\infty ).$
\end{definition}
\begin{definition}\cite{ref1}
\label{def2} \textrm{The Riemann-Liouville fractional derivative of order }$%
\alpha >0\ $\textrm{for a measurable function }$f:(0,+\infty )\rightarrow 
%TCIMACRO{\U{211d} }%
%BeginExpansion
\mathbb{R}
%EndExpansion
$\textrm{\ is defined as }%
\begin{equation*}
D^{\alpha }f(t)=\dfrac{1}{\Gamma (n-\alpha )}\frac{d^n}{dt^n}\int_{0}^{t}(t-s)^{n-\alpha -1}f(s)ds=\frac{d^n}{dt^n}I^{n-\alpha
}f(t),\ 
\end{equation*}%
provided that the right-hand side is pointwise defined on $(0,+\infty )$. \textrm{Here }$n=[\alpha ]+1$\textrm{, where }$[\alpha ]$\textrm{\  denotes the integer part of the real number }$\alpha $.
\end{definition}
\begin{definition} \label{d-MiLe} \cite[p.\ 42]{ref1} 
A two parameter function of the Mittag$-$Leffler $E_{\alpha,\beta}(x)$ is defined by the series expansion 
$$  E_{\alpha,\beta}(x)=\sum_{k=0}^{\infty} \frac{x^k}{\Gamma(\alpha k+\beta)}, \ \ \alpha,\beta >0, \ x \in \mathbb{R} .$$
For $\beta=1$,  $E_{\alpha,1}$ coincides with the usual Mittag$-$Leffler function $E_{\alpha}$.   
\end{definition}
\begin{theorem}\cite[ Theorem 5.1, p.\ 284]{ref1}\label{th1} 
Let $n-1< \alpha \leq n$ ($n \in \mathbb{N}$) and $\lambda \in \mathbb{R}$ be given. Then the functions 
$$ u_{j}(t)= t^{\alpha-j} E_{\alpha,\alpha+1-j}(\lambda t^\alpha), \ \ \ j=1,\ldots, n,$$
yield a fundamental system of solutions of the equation
$$ D^\alpha u(t)-\lambda u(t)=0, \ \ t>0.$$
\end{theorem}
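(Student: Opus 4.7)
The plan is to break the theorem into two parts: first, verify that each $u_j$ satisfies the equation; second, show that the collection $\{u_1,\ldots,u_n\}$ is linearly independent and exhausts the solution space.

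For the first part, I expand $u_j(t)$ using Definition \ref{d-MiLe} into the power series
\[
u_j(t) = \sum_{k=0}^{\infty} \frac{\lambda^k\, t^{\alpha k + \alpha - j}}{\Gamma(\alpha k + \alpha - j + 1)},
\]
and apply $D^\alpha$ term by term, relying on the elementary identity $D^\alpha t^\gamma = \Gamma(\gamma+1)/\Gamma(\gamma-\alpha+1)\, t^{\gamma-\alpha}$ valid for $\gamma > -1$ (which covers all exponents appearing here since $j \leq n$ and $\alpha > n-1$). Term-by-term differentiation is legitimate because the Mittag-Leffler series has infinite radius of convergence, so uniform convergence on compact subsets of $(0,\infty)$ passes through the Riemann-Liouville integral from Definition \ref{def2}. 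The critical observation is that the $k=0$ term carries a prefactor $1/\Gamma(1-j)$ that vanishes, since $1-j$ is a non-positive integer for $j \in \{1,\ldots,n\}$; re-indexing $k \mapsto k+1$ in the surviving tail then recovers precisely $\lambda\, u_j(t)$.

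For the second part, the asymptotics near $t = 0^+$ make linear independence transparent: the leading term of $u_j$ is $t^{\alpha-j}/\Gamma(\alpha-j+1)$, and the $n$ exponents $\alpha-j$ are pairwise distinct, so any nontrivial linear combination $\sum c_j u_j$ retains a nonzero lowest-order term and cannot vanish identically. The step I expect to be the main obstacle is establishing that the solution space of $D^\alpha u - \lambda u = 0$ on $(0,\infty)$, in the class of functions for which the Riemann-Liouville derivative is properly defined, has dimension exactly $n$, so that the $n$ independent $u_j$ already form a complete basis. I would secure this either via the Laplace transform, which converts the equation into an algebraic relation whose inverse is a linear combination of the $u_j$ determined by the $n$ admissible fractional initial data, or by reducing the equation to an equivalent Volterra integral equation and using its unique solvability given those initial data.
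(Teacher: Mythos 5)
The paper does not prove this statement at all: it is quoted verbatim from the monograph of Kilbas, Srivastava and Trujillo (Theorem 5.1 there), so there is no internal proof to compare against; I can only judge your argument against the standard one in that source. Your verification step is correct and complete in outline: the exponents $\alpha k+\alpha-j$ all exceed $-1$ (indeed $\alpha-j>\alpha-n>-1$), so the power rule $D^\alpha t^\gamma=\frac{\Gamma(\gamma+1)}{\Gamma(\gamma+1-\alpha)}t^{\gamma-\alpha}$ applies to every term; the $k=0$ term is annihilated precisely because $1/\Gamma(1-j)=0$ for $j=1,\dots,n$ (equivalently, $t^{\alpha-1},\dots,t^{\alpha-n}$ span the kernel of $D^\alpha$); and the shift $k\mapsto k+1$ returns $\lambda u_j$. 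The interchange of $D^\alpha=\frac{d^n}{dt^n}I^{n-\alpha}$ with the sum is justified as you say (absolute convergence lets the integral pass through; local uniform convergence of the differentiated series handles $d^n/dt^n$), and the independence argument via the pairwise distinct leading exponents $\alpha-j$ is sound, since no higher term $\alpha k+\alpha-j$ with $k\ge1$ can interfere with the smallest surviving exponent.

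The one genuine gap is the part you yourself flag: that the solution space has dimension exactly $n$, which is the actual content of the phrase ``fundamental system.'' As stated you only name two possible tools. To close it you must first fix the class of admissible solutions (e.g.\ $u$ such that $I^{n-\alpha}u$ has an absolutely continuous $(n-1)$-st derivative on compact subintervals, which is the setting of the cited monograph), because the key identity $I^\alpha D^\alpha u(t)=u(t)-\sum_{j=1}^{n}c_j\,t^{\alpha-j}$ only holds in such a class; applying $I^\alpha$ to $D^\alpha u=\lambda u$ then gives the Volterra equation $u(t)=\sum_{j=1}^{n}c_j t^{\alpha-j}+\lambda I^\alpha u(t)$, whose unique solution (by successive approximations, which converge in the weighted space $C_{n-\alpha}$) is exactly the corresponding linear combination of the $u_j$, yielding dimension $n$. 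The Laplace-transform route, which is essentially what the cited source uses, works too but needs additional hypotheses (exponential boundedness, behaviour at $0$) that must then be removed or built into the solution class, so the Volterra reduction is the cleaner way to finish. Until one of these is actually carried out, your argument only shows that the $u_j$ are $n$ independent solutions, not that they span.
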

\begin{theorem} \cite[Theorem 5.7, p.\ 302]{ref1}\label{th2}
Let $n-1< \alpha \leq n$ ($n \in \mathbb{N}$) and $\lambda \in \mathbb{R}$, and let $f$ be a given real function defined on $ \mathbb{R}$. Then the equation 
$$ D^\alpha u(t)-\lambda u(t)=f(t), \ \ t>0,$$
is solvable and its general is given by
$$u(t)=  \int_{0}^{t} (t-s)^{\alpha-1} E_{\alpha,\alpha}[\lambda (t-s)^\alpha] f(s)ds + \sum_{j=1}^{n}c_{j}t^{\alpha-j} E_{\alpha,\alpha+1-j}(\lambda t^\alpha), $$
with $c_{j} \in \mathbb{R}$, $j=1,....,n$, arbitrarily chosen.
\end{theorem}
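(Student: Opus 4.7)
The plan is to exploit the linearity of the operator $L := D^{\alpha} - \lambda\, \mathrm{Id}$ through superposition. Since Theorem \ref{th1} already produces the general solution of the homogeneous equation $Lu = 0$, namely a linear combination of $\{t^{\alpha-j} E_{\alpha, \alpha+1-j}(\lambda t^\alpha)\}_{j=1}^{n}$, the only real task is to exhibit a single particular solution $u_p$ of the inhomogeneous equation. Once such $u_p$ is produced, the claimed formula follows immediately: any two solutions of $Lu = f$ differ by a solution of $Lu = 0$, and that difference is pinned down by Theorem \ref{th1} to be a combination of the fundamental system.

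To produce $u_p$, I would take the candidate
\[
u_p(t) := \int_{0}^{t} (t-s)^{\alpha-1} E_{\alpha, \alpha}[\lambda (t-s)^{\alpha}] f(s)\, ds
\]
and unveil its structure by expanding the Mittag-Leffler factor through Definition \ref{d-MiLe}. Interchanging sum and integral (legitimate because the series converges uniformly on compacts) and recognizing Definition \ref{def1} term by term yields
\[
u_p(t) = \sum_{k=0}^{\infty} \lambda^{k}\, I^{\alpha(k+1)} f(t).
\]
Applying $D^{\alpha}$ term by term and using the standard semigroup identity $I^{\alpha} I^{\beta} = I^{\alpha + \beta}$ together with $D^{\alpha} I^{\alpha} = \mathrm{Id}$, one obtains $D^{\alpha} I^{\alpha(k+1)} f = I^{\alpha k} f$; peeling off the $k=0$ term then produces $D^{\alpha} u_p = f + \lambda u_p$, which is exactly $L u_p = f$.

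The main obstacle is the rigorous termwise differentiation. Since Definition \ref{def2} writes $D^{\alpha}$ as $\frac{d^{n}}{dt^{n}} \circ I^{n-\alpha}$, two interchanges with the infinite sum are needed: one with the fractional integral $I^{n-\alpha}$ and one with the classical $n$th derivative $\frac{d^{n}}{dt^{n}}$. The first interchange is harmless, because the series $\sum_{k\geq 0} \lambda^{k} I^{\alpha(k+1)} f(t)$ converges uniformly on $[0, T]$ for every $T > 0$ thanks to the $1/\Gamma(\alpha k)$ decay that makes each $E_{\alpha, \beta}$ entire. The second interchange is more delicate: one must show that the termwise differentiated series still converges uniformly on compact subsets of $(0, \infty)$, noting that near $t = 0$ the lowest-order terms carry the singular factors $t^{\alpha - j}$ characteristic of Riemann-Liouville derivatives. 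Once this regularity analysis is carried out, the algebraic cancellation described above closes the proof.
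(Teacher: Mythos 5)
This statement is quoted verbatim from the reference \cite{ref1} (Theorem 5.7 there); the paper itself offers no proof, so the comparison is with the source rather than with anything in the text. Your argument is correct in outline but follows a genuinely different route: Kilbas--Srivastava--Trujillo obtain the solution formula by the Laplace transform method (that is the theme of their Chapter 5), whereas you verify directly that the convolution with the kernel $t^{\alpha-1}E_{\alpha,\alpha}(\lambda t^{\alpha})$ is a particular solution, by expanding the Mittag--Leffler series, recognizing $u_p=\sum_{k\ge 0}\lambda^{k}I^{\alpha(k+1)}f$, and using $D^{\alpha}I^{\alpha}=\mathrm{Id}$ together with the semigroup law, then appealing to Theorem \ref{th1} and linearity for the general solution. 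Your approach is more elementary (no transform machinery, no inversion step) and meshes well with how the paper actually uses the formula, at the price of having to justify the two interchanges you flag; note that these, as well as the identities $I^{\alpha}I^{\beta}=I^{\alpha+\beta}$ and $D^{\alpha}I^{\alpha}=\mathrm{Id}$, require some regularity of $f$ (say $f$ continuous or locally integrable), a hypothesis the loose statement ``a real function defined on $\R$'' sweeps under the rug but which is satisfied in the paper's application where $y\in C(I)$. One small correction: the singular factors $t^{\alpha-j}$ you worry about near $t=0$ occur only in the homogeneous solutions of Theorem \ref{th1}; every term $I^{\alpha(k+1)}f$ of the particular-solution series vanishes at $0$, so the termwise application of $\frac{d^{n}}{dt^{n}}$ is controlled by uniform convergence on compacts of $\sum_{k}\lambda^{k}I^{\alpha k}f$ and poses no singularity issue.
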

Let $C(I)$ the Banach space of all continuous functions defined on $I$ endowed with the norm $\Vert f \Vert=:\max\lbrace \vert f(t)\vert: t\in I \rbrace$.\\
Define for $t \in I$, $f_{\gamma}(t)=t^\gamma f(t).$ 
Let $C_{\gamma}(I)$, $\gamma \geq 0$ be the space of all functions $f$ such that $f_{\gamma} \in C(I)$. 
It is well known  that $C_{\gamma}(I)$ is a Banach space endowed with the norm 
$$\Vert f \Vert_{\gamma}=:\max\lbrace t^{\gamma} \vert f(t)\vert: t\in I \rbrace.$$
\section{Green's function}
In this section we obtain the explicit expression of the Green's function associated to the linear problem (\ref{pr1}).

First of all, we must determine the eigenvalues of the homogeneous problem (\ref{pr1}) (when $y\equiv 0$ on $I$). So, by Theorem \ref{th1}, its general solution is given by 
\begin{equation}\label{eq1}
u(t)=C_{1}  t^{\alpha-1} E_{\alpha,\alpha}(\lambda t^\alpha) +C_{2} t^{\alpha-2}E_{\alpha,\alpha-1}(\lambda t^\alpha),
\end{equation}
with $C_1$, $C_2 \in \R$.

Thus,
\begin{equation*}
t^{2-\alpha}u(t)=C_{1} t E_{\alpha,\alpha}(\lambda t^\alpha)+C_{2} E_{\alpha,\alpha-1}(\lambda t^\alpha).
\end{equation*}

Since $0=\lim\limits_{t\rightarrow 0^+} t^{2-\alpha}u(t)$, we get 
$$0=C_{2} E_{\alpha,\alpha-1}(0)=\frac{C_{2}}{\Gamma(\alpha-1)},$$
and so, $C_{2}=0$.

Differentiating (\ref{eq1}), we obtain, for $t>0$,
$$ u'(t)=C_{1} t^{\alpha-2} E_{\alpha,\alpha-1}(\lambda t^\alpha).$$ 

Then $u'(1)=0$ implies that 
$$C_{1} E_{\alpha,\alpha-1}(\lambda)=0.$$

Therefore, $\lambda$ is an eigenvalue of problem (\ref{pr1}) if and only if
\begin{equation}\label{mit}
 E_{\alpha,\alpha-1}(\lambda)=0. 
\end{equation}

It is clear, from Definition \ref{d-MiLe}, that all the zeros of previous equation must be negative. This equation will have for any $\alpha \in (1,2]$ a finite number of negative zeros. Numerically, in Table \ref{t-1}, are compiled the estimations for the first negative zero, which is denoted by $\lambda_1^*$.

	\begin{table}[]
	\centering
	\caption{First eigenvalue $\lambda^*_1$ of Problem (\ref{pr1})\label{t-1}}
\hspace*{-3cm}	\begin{tabular}{l|llllllllll}
		$\alpha$ & 1.1 & 1.2 & 1.3 & 1.4 & 1.5 & 1.6 & 1.7 & 1.8 & 1.9 & 2\\
		\hline
		\hline
		$ \lambda^*_{1}$ & -0.104812 & -0.221832& -0.355588 &-0.511676 &-0.697078 &-0.920556 &-1.19319&-1.52904&-1.9461& -2.4674
	\end{tabular}
\end{table}

In  next result, we deduce the expression of the Green's function associated to the linear problem (\ref{pr1}).
\begin{theorem}\label{th3}
Let $y \in C(I)$, $1<\alpha\leq 2$ and $\lambda \in \mathbb{R}$ be such that $E_{\alpha,\alpha-1}(\lambda)\neq 0$. Then problem (\ref{pr1}) has a unique solution 
$$ u \in C^1_{2-\alpha}(I)=\{u:I\to \R; t^{2-\alpha}\, u(t) \in C^1(I)\},$$
 given by 
$$ u(t)=\int_{0}^{1} G(t,s)y(s)ds,$$
where 
\begin{eqnarray}\label{G}
G(t,s)=
\begin{cases}
\frac{ t^{\alpha-1} E_{\alpha,\alpha}(\lambda t^\alpha) E_{\alpha,\alpha-1}(\lambda(1-s)^\alpha)}{(1-s)^{2-\alpha}E_{\alpha,\alpha-1}(\lambda)}- (t-s)^{\alpha-1} E_{\alpha,\alpha}(\lambda (t-s)^\alpha), \ \ \ 0\leq s \leq t \leq 1, \\
\frac{ t^{\alpha-1} E_{\alpha,\alpha}(\lambda t^\alpha) E_{\alpha,\alpha-1}(\lambda(1-s)^\alpha)}{(1-s)^{2-\alpha}E_{\alpha,\alpha-1}(\lambda)},\qquad \qquad \qquad \qquad \qquad \qquad \quad \; 0\leq t < s <1.\\
\end{cases}
\end{eqnarray}
\end{theorem}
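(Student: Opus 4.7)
The plan is to build the solution by fractional variation of parameters, use the two boundary conditions to pin down the integration constants, and then read off the integral kernel as the claimed Green's function.

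First, I would rewrite \eqref{pr1} as $D^\alpha u - \lambda u = -y$ and apply Theorem \ref{th2} with $n=2$. This yields
$$u(t) = -\int_0^t (t-s)^{\alpha-1} E_{\alpha,\alpha}(\lambda(t-s)^\alpha)\, y(s)\, ds + C_1\, t^{\alpha-1} E_{\alpha,\alpha}(\lambda t^\alpha) + C_2\, t^{\alpha-2} E_{\alpha,\alpha-1}(\lambda t^\alpha).$$
Imposing $\lim_{t\to 0^+} t^{2-\alpha} u(t) = 0$ proceeds exactly as in the eigenvalue computation preceding the theorem: the weighted Volterra term is $O(t^2)$, the $C_1$-contribution is $C_1 t E_{\alpha,\alpha}(\lambda t^\alpha)\to 0$, and the $C_2$-contribution tends to $C_2/\Gamma(\alpha-1)$, forcing $C_2=0$. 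Next, using $\frac{d}{dt}\bigl[t^{\alpha-1}E_{\alpha,\alpha}(\lambda t^\alpha)\bigr]=t^{\alpha-2}E_{\alpha,\alpha-1}(\lambda t^\alpha)$ (a direct check on the defining series) together with the Leibniz rule for the Volterra integral (the boundary term vanishes because $(t-s)^{\alpha-1}|_{s=t}=0$ for $\alpha>1$), I obtain
$$u'(t) = -\int_0^t (t-s)^{\alpha-2} E_{\alpha,\alpha-1}(\lambda(t-s)^\alpha)\, y(s)\, ds + C_1\, t^{\alpha-2} E_{\alpha,\alpha-1}(\lambda t^\alpha).$$
Evaluating at $t=1$ and using $E_{\alpha,\alpha-1}(\lambda)\neq 0$ solves for
$$C_1 = \frac{1}{E_{\alpha,\alpha-1}(\lambda)}\int_0^1 \frac{E_{\alpha,\alpha-1}(\lambda(1-s)^\alpha)}{(1-s)^{2-\alpha}}\, y(s)\, ds.$$

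Substituting $C_1$ and $C_2$ back and splitting the single integral representation into the regions $s\leq t$ and $s>t$ produces the piecewise formula \eqref{G} verbatim. Uniqueness is automatic, since $E_{\alpha,\alpha-1}(\lambda)\neq 0$ is precisely the condition \eqref{mit} ruling out $\lambda$ as an eigenvalue, so only the trivial function satisfies the homogeneous problem in $C^1_{2-\alpha}(I)$. For the regularity claim $u\in C^1_{2-\alpha}(I)$, I would multiply the representation by $t^{2-\alpha}$ and check that the non-Volterra piece reduces to $t E_{\alpha,\alpha}(\lambda t^\alpha)$, which is manifestly $C^1$ on $I$, while the weighted Volterra piece is $C^1$ by the same Leibniz-with-integrable-singularity argument used for $u'$.

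The main obstacle I anticipate is the differentiation of the Volterra term when $1<\alpha<2$: the differentiated kernel carries the factor $(t-s)^{\alpha-2}$ with $\alpha-2\in(-1,0)$, which is singular at $s=t$. Justifying the formula for $u'(t)$ and, in particular, the evaluation $u'(1)=0$ used to pin down $C_1$, requires showing that this singularity is integrable and that the resulting integral depends continuously on $t$ up to $t=1$; a uniform bound plus dominated convergence should close this gap. Once this single analytic step is in place, the remainder of the argument is purely algebraic bookkeeping with Mittag--Leffler functions.
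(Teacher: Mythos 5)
Your proposal is correct and follows essentially the same route as the paper: apply Theorem \ref{th2}, use $\lim_{t\to 0^+}t^{2-\alpha}u(t)=0$ to force $C_2=0$, differentiate and impose $u'(1)=0$ (using $E_{\alpha,\alpha-1}(\lambda)\neq 0$) to solve for $C_1$, and split the resulting integral into the two regions to obtain \eqref{G}. The integrability and continuity issues you flag for the differentiated Volterra kernel with the $(t-s)^{\alpha-2}$ singularity are exactly what the paper disposes of via Lemma \ref{lm1}(v), so your added justification only makes that implicit step explicit.
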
 

Before proving previous result, by simple calculations, we obtain the following result.
\begin{lemma}\label{lm1}
	Let $1<\alpha \leq 2$ and $\lambda \in \mathbb{R}$. Then function $G$ defined by (\ref{G}) satisfies the following properties.
	\begin{itemize}
		\item[(i)] $G$ is a continuous function on $I\times [0,1)$.
		\item[(ii)] $G(0,s)=G(t,0)=\frac{\partial G(t,s)}{\partial t}\vert_{t=1} =0$, for all $t \in I$, and $s \in[0,1)$.
		\item[(iii)] $\lim\limits_{s\rightarrow 1^{-}} \vert G(t,s) \vert =+\infty$, for all $t \in (0,1]$.
		\item[(iv)] $ \int_{0}^{1} \vert G(t,s) \vert ds < \infty$, for all $ t \in I$.
		\item[(v)] $ \int_{0}^{1} \vert \frac{\partial}{\partial t} t^{2-\alpha}\,G(t,s) \vert ds < \infty$, for all $ t \in I$.
	\end{itemize}
\end{lemma}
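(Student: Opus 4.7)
The plan is to verify each of the five assertions by direct computation. Two facts about the Mittag--Leffler function will do essentially all the work: every $E_{\alpha,\beta}$ is entire in its argument (so continuous and bounded on any compact set), and termwise differentiation of the series yields the identity
\[
\frac{d}{dt}\bigl[t^{\alpha-1}E_{\alpha,\alpha}(\lambda t^\alpha)\bigr]=t^{\alpha-2}E_{\alpha,\alpha-1}(\lambda t^\alpha),\qquad t>0,
\]
together with the analogous formula obtained by replacing $t$ with $t-s$.

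For (i), I would observe that on each of the closed regions $\{0\leq s\leq t\leq 1\}$ and $\{0\leq t<s<1\}$ the defining expression is a product/quotient of continuous functions, with the only denominator $E_{\alpha,\alpha-1}(\lambda)$ nonzero by the assumption of Theorem \ref{th3}; on the diagonal $s=t$ the two branches agree because the $(t-s)^{\alpha-1}$ summand vanishes there. For (ii), the three vanishing relations follow by direct substitution: the factor $t^{\alpha-1}$ and the term $(t-0)^{\alpha-1}\cdot 0$ collectively kill everything at $t=0$; at $s=0$ the two summands of the upper branch cancel because $(1-0)^{2-\alpha}=1$ and $E_{\alpha,\alpha-1}(\lambda)/E_{\alpha,\alpha-1}(\lambda)=1$; and applying the differentiation identity above to both summands of the upper branch and evaluating at $t=1$ produces a telescoping cancellation that gives $\partial_t G(t,s)|_{t=1}=0$.

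For (iii), fix $t\in(0,1]$ and let $s\to 1^-$; since $\alpha-2<0$ (for $\alpha<2$; the case $\alpha=2$ is degenerate), the factor $(1-s)^{\alpha-2}$ blows up, while $E_{\alpha,\alpha-1}(\lambda(1-s)^\alpha)\to 1/\Gamma(\alpha-1)\neq 0$ and $t^{\alpha-1}E_{\alpha,\alpha}(\lambda t^\alpha)$ is a fixed (generically nonzero) constant; the second summand of the upper branch stays bounded, so $|G(t,s)|\to+\infty$. Properties (iv) and (v) then reduce to checking that the only singularities of the integrands, namely $(1-s)^{\alpha-2}$ on $[0,1]$ and $(t-s)^{\alpha-2}$ on $[0,t]$ (the latter appearing only after differentiation in (v)), are integrable; this holds because $\alpha-2>-1$, and away from these singularities the Mittag--Leffler factors are uniformly bounded on the compact ranges of their arguments.

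The only place demanding real care, and hence the expected obstacle, is (v): I must differentiate $t^{2-\alpha}G(t,s)$ using the product rule before invoking the Mittag--Leffler identity, so the lower branch yields a single term with an integrable factor $(1-s)^{\alpha-2}$, while the upper branch contributes additionally a $(t-s)^{\alpha-2}$-singular term from differentiating the second summand and a regular term of the form $(2-\alpha)t^{1-\alpha}(t-s)^{\alpha-1}E_{\alpha,\alpha}(\lambda(t-s)^\alpha)$ from the product rule; both are $s$-integrable on $[0,t]$. Apart from this bookkeeping and uniform boundedness of the Mittag--Leffler factors, no substantive difficulty is anticipated.
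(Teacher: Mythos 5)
Your verification is correct and takes essentially the paper's approach: the paper gives no written proof at all (the lemma is asserted "by simple calculations"), and your direct, series-based computation --- the identity $\frac{d}{dt}\bigl[t^{\alpha-1}E_{\alpha,\alpha}(\lambda t^{\alpha})\bigr]=t^{\alpha-2}E_{\alpha,\alpha-1}(\lambda t^{\alpha})$, the cancellation at $s=0$ and at $t=1$, and the integrability of the $(1-s)^{\alpha-2}$ and $(t-s)^{\alpha-2}$ singularities since $\alpha-2>-1$ --- supplies exactly those calculations. Your side remarks on (iii) are accurate refinements of the statement as printed: the blow-up requires $\alpha<2$ and $E_{\alpha,\alpha}(\lambda t^{\alpha})\neq 0$, which holds for every $t\in(0,1]$ when $\lambda>\lambda_{1}^{*}$, the regime actually used later in the paper.
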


\noindent{\bf Proof of Theorem \ref{th3}}
Using Theorem \ref{th2}, the solutions of problem (\ref{pr1}) are given by
\begin{equation}\label{eq03}
u(t)=C_{1}  t^{\alpha-1} E_{\alpha,\alpha}(\lambda t^\alpha) +C_{2} t^{\alpha-2}E_{\alpha,\alpha-1}(\lambda t^\alpha)-\int_{0}^{t} (t-s)^{\alpha-1} E_{\alpha,\alpha}(\lambda (t-s)^\alpha) y(s) ds.
\end{equation}

Since $\lim\limits_{t\rightarrow 0^+} t^{2-\alpha}u(t)=0$, it is clear that $C_{2}=0$.

Now, we take derivative of (\ref{eq03}) for $t >0$. So, we have
$$ u'(t)= C_{1} t^{\alpha-2}E_{\alpha,\alpha-1}(\lambda t^\alpha)-\int_{0}^{t} (t-s)^{\alpha-2} E_{\alpha,\alpha-1}(\lambda (t-s)^\alpha ) y(s) ds.$$

Then, condition $u'(1)=0$ implies that $$C_{1}=\frac{1}{E_{\alpha,\alpha-1}(\lambda)}\int_{0}^{1} (1-s)^{\alpha-2} E_{\alpha,\alpha-1}(\lambda (1-s)^\alpha ) y(s)ds.$$

As a consequence, the unique solution $u$ of problem \eqref{pr1} is given by
\begin{eqnarray*}
u(t)&=&\frac{1}{E_{\alpha,\alpha-1}(\lambda)} \int_{0}^{1} (1-s)^{\alpha-2}  t^{\alpha-1} E_{\alpha,\alpha}(\lambda t^\alpha) E_{\alpha,\alpha-1}(\lambda (1-s)^\alpha ) y(s) ds\\
&&- \int_{0}^{t}  (t-s)^{\alpha-1} E_{\alpha,\alpha}(\lambda (t-s)^\alpha) y(s) ds.
\end{eqnarray*}

Finally, we deduce that 
\begin{eqnarray*}
u(t)&= &  \int_{0}^{t}\left( \frac{(1-s)^{\alpha-2}  t^{\alpha-1} E_{\alpha,\alpha}(\lambda t^\alpha) E_{\alpha,\alpha-1}(\lambda (1-s)^\alpha )}{E_{\alpha,\alpha-1}(\lambda)}- (t-s)^{\alpha-1} E_{\alpha,\alpha}(\lambda (t-s)^\alpha) \right) y(s) ds\\
&+ & \int_{t}^{1} \frac{(1-s)^{\alpha-2}  t^{\alpha-1} E_{\alpha,\alpha}(\lambda t^\alpha) E_{\alpha,\alpha-1}(\lambda (1-s)^\alpha )}{E_{\alpha,\alpha-1}(\lambda)} y(s) ds\\
&= & \int_{0}^{1} G(t,s)y(s)ds.
\end{eqnarray*}

From Theorem \ref{th3} $(v)$, the proof is completed.
\qed \\

The following lemma describes the set of real parameters $\lambda$ for which the Green's function has a constant sign. To this end, we introduce $\lambda_{1}^{*}$ as the biggest negative zero of $E_{\alpha,\alpha-1}(\lambda)$.  
\begin{lemma}\label{lm2}
 Let $G$ be the Green's function associated to problem (\ref{pr1}) and $\lambda_{1}^{*}$ be the first negative zero of $E_{\alpha,\alpha-1}(\lambda)=0$. Then for $1< \alpha \leq 2$, it is satisfied that
 $$ G(t,s) >0 \ \ for \ all \ t, s \in(0,1)\ if \ and \ only \ if \ \ \lambda> \lambda_{1}^{*}.$$ 
\end{lemma}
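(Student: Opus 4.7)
My plan is to prove each implication by leveraging the explicit formula \eqref{G} together with the spectral analysis of the Dirichlet problem \eqref{p-Dir} carried out in \cite{ref2}.

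For the implication $\lambda > \lambda_1^* \Rightarrow G > 0$, I would relate the mixed Green's function $G$ to the Dirichlet Green's function $G_D$ of \eqref{p-Dir}. Since both problems share the same fractional equation and the same left boundary condition, the difference of their solutions is proportional to $t^{\alpha-1}E_{\alpha,\alpha}(\lambda t^\alpha)$; matching the right boundary conditions ($u'(1)=0$ versus $u(1)=0$) yields the identity
$$ G(t,s) \;=\; G_D(t,s) \;-\; \frac{t^{\alpha-1}E_{\alpha,\alpha}(\lambda t^\alpha)}{E_{\alpha,\alpha-1}(\lambda)}\;\frac{\partial G_D}{\partial t}(1,s). $$
I would then verify three sign facts, each valid for $\lambda > \lambda_1^*$: (a) $E_{\alpha,\alpha-1}(\lambda) > 0$, immediate from the definition of $\lambda_1^*$ as the largest negative zero of $E_{\alpha,\alpha-1}$ together with $E_{\alpha,\alpha-1}(0)=1/\Gamma(\alpha-1)>0$; (b) the first negative zero $\mu_1^*$ of $E_{\alpha,\alpha}$, which is the first eigenvalue of \eqref{p-Dir}, satisfies $\mu_1^* < \lambda_1^*$, so that $E_{\alpha,\alpha}(\lambda t^\alpha)>0$ for all $t \in [0,1]$ and $G_D(t,s)>0$ on $(0,1)^2$ by \cite{ref2}; (c) since $G_D(1,s)=0$ while $G_D(t,s)>0$ for $t<1$, one has $\partial_t G_D(1,s) \le 0$. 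Combining (a)--(c), the correction term is nonnegative, whence $G > 0$.

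For the converse, at $\lambda = \lambda_1^*$ the denominator of \eqref{G} vanishes, so $G$ is undefined. For $\lambda$ just below $\lambda_1^*$, $E_{\alpha,\alpha-1}(\lambda)<0$ while the remaining factors in the $\{t<s\}$ branch of \eqref{G} retain positive signs near $(t,s) \approx (0^+,1^-)$, which forces $G<0$ there. Pushing $\lambda$ further to the left of $\lambda_1^*$, the factor $E_{\alpha,\alpha-1}(\lambda(1-s)^\alpha)$ itself changes sign as $s$ varies in $[0,1)$, again yielding negativity of $G$ somewhere.

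The main obstacle I expect is step (b), namely the spectral comparison $\mu_1^* < \lambda_1^*$. One route is a direct analytic comparison of the first negative roots of $E_{\alpha,\alpha-1}$ and $E_{\alpha,\alpha}$; the inequality is explicit at $\alpha=2$ (where the roots are $-\pi^2/4$ and $-\pi^2$ respectively) and should persist on $(1,2]$ by a continuity and monotonicity argument in $\alpha$. An alternative is a variational min-max characterization comparing the first eigenvalues of the mixed and Dirichlet fractional problems.
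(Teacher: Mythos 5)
Your proposal is essentially correct but takes a genuinely different route from the paper in the sufficiency direction. The paper argues by contradiction: if $G_{\lambda_2}(t_0,s_0)=0$ with $\lambda_2>\lambda_1^*$, then $V(\cdot)=G_{\lambda_2}(\cdot,s_0)$ becomes a nontrivial eigenfunction of an auxiliary mixed problem on $(t_0,1)$ (case $s_0\le t_0$) or of a Dirichlet problem on $(0,t_0)$ (case $t_0\le s_0$), and the corresponding eigenvalues are then located below $\lambda_1^*$ --- the first case through the transcendental equation \eqref{01}, verified numerically (Figures \ref{eigenvalue-1} and \ref{eigenvalue-2}), the second through the comparison $\lambda_1<\lambda_1^*$ between the first negative zeros of $E_{\alpha,\alpha}$ and $E_{\alpha,\alpha-1}$ (Figure \ref{fig1}). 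You instead use the decomposition $G(t,s)=G_D(t,s)-\frac{t^{\alpha-1}E_{\alpha,\alpha}(\lambda t^{\alpha})}{E_{\alpha,\alpha-1}(\lambda)}\,\partial_t G_D(1,s)$, which is indeed correct (it can be checked directly against \eqref{G} and the Dirichlet kernel of \cite{ref2} using $\frac{d}{dt}\bigl[t^{\alpha-1}E_{\alpha,\alpha}(\lambda t^{\alpha})\bigr]=t^{\alpha-2}E_{\alpha,\alpha-1}(\lambda t^{\alpha})$), and your facts (a), (c) together with $G_D>0$ from \cite{ref2} give $G\ge G_D>0$ at once. This is cleaner than the paper's two-case eigenvalue localization and, in particular, dispenses entirely with the numerical study of \eqref{01}; it also yields positivity directly rather than only non-vanishing.

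What your route does not remove is precisely the point you flag: the spectral comparison $\mu_1^*=\lambda_1<\lambda_1^*$. Be aware that the paper itself justifies this only graphically (Figure \ref{fig1}), so you are on the same footing there --- your continuity-in-$\alpha$ sketch is not yet a proof, but neither approach currently has one. For the converse, your argument matches the paper's in substance; note only that the claim $E_{\alpha,\alpha-1}(\lambda)<0$ just below $\lambda_1^*$ tacitly assumes the zero is simple and is not needed: for any $\lambda<\lambda_1^*$ one may choose $s_0\in(0,1)$ with $\lambda(1-s_0)^{\alpha}=\lambda_1^*$, so that $G(t,s_0)=0$ for all $t<s_0$, which already contradicts strict positivity, and the sign change then follows as in the paper.
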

\begin{proof}
Suppose, on the contrary, that there are $\lambda_{2}> \lambda_{1}^{*}$ and $(t_{0},s_{0}) \in (0,1)\times (0,1)$ such that $G_{\lambda_{2}}(t_{0},s_{0})=0$, where $G_{\lambda_{2}}$ is the Green's function associated to problem (\ref{pr1}) for $\lambda=\lambda_{2}$.

If $s_{0}\leq t_{0}$,
define the function $V:I\rightarrow \mathbb{R}$, as $V(t)=G_{\lambda_{2}}(t,s_{0})$.

It is clear, from \eqref{G}, that $V$ is not identically zero on $[t_{0},1]$ and solves the following problem
\begin{eqnarray}
\begin{cases}
D^{\alpha} V(t)-\lambda_{2} V(t)=0,  \ \ \ \ \ t \in (t_{0},1),\\ V(t_0)=V'(1)=0.
\end{cases} \label{prr}
\end{eqnarray}

In particular, $\lambda_{2}$ is an eigenvalue of problem (\ref{prr}).\\

Arguing as in the beginning of this section, it is immediate to verify that the eigenvalues $\overline{\lambda}_{n}$ of problem (\ref{prr}) are given as the zeros of the following equality 
\begin{equation}\label{01}
E_{\alpha,\alpha-1}(\lambda)\, E_{\alpha,\alpha-1}(\lambda\, t_0^\alpha)=t_0\,E_{\alpha,\alpha}(\lambda\,t_0^\alpha)\, E_{\alpha,\alpha-2}(\lambda).
\end{equation}

By means of numerical approach, one can verify (See Figures \ref{eigenvalue-1} and \ref{eigenvalue-2}) that all the zeros $\overline{\lambda}$ of previous equation for $t_0>0$ satisfy that $\overline{\lambda} <\lambda_{1}^{*}$, which is a contradiction with the choice of $\lambda_2$.

\begin{figure}[h!]
	\centering
	\includegraphics{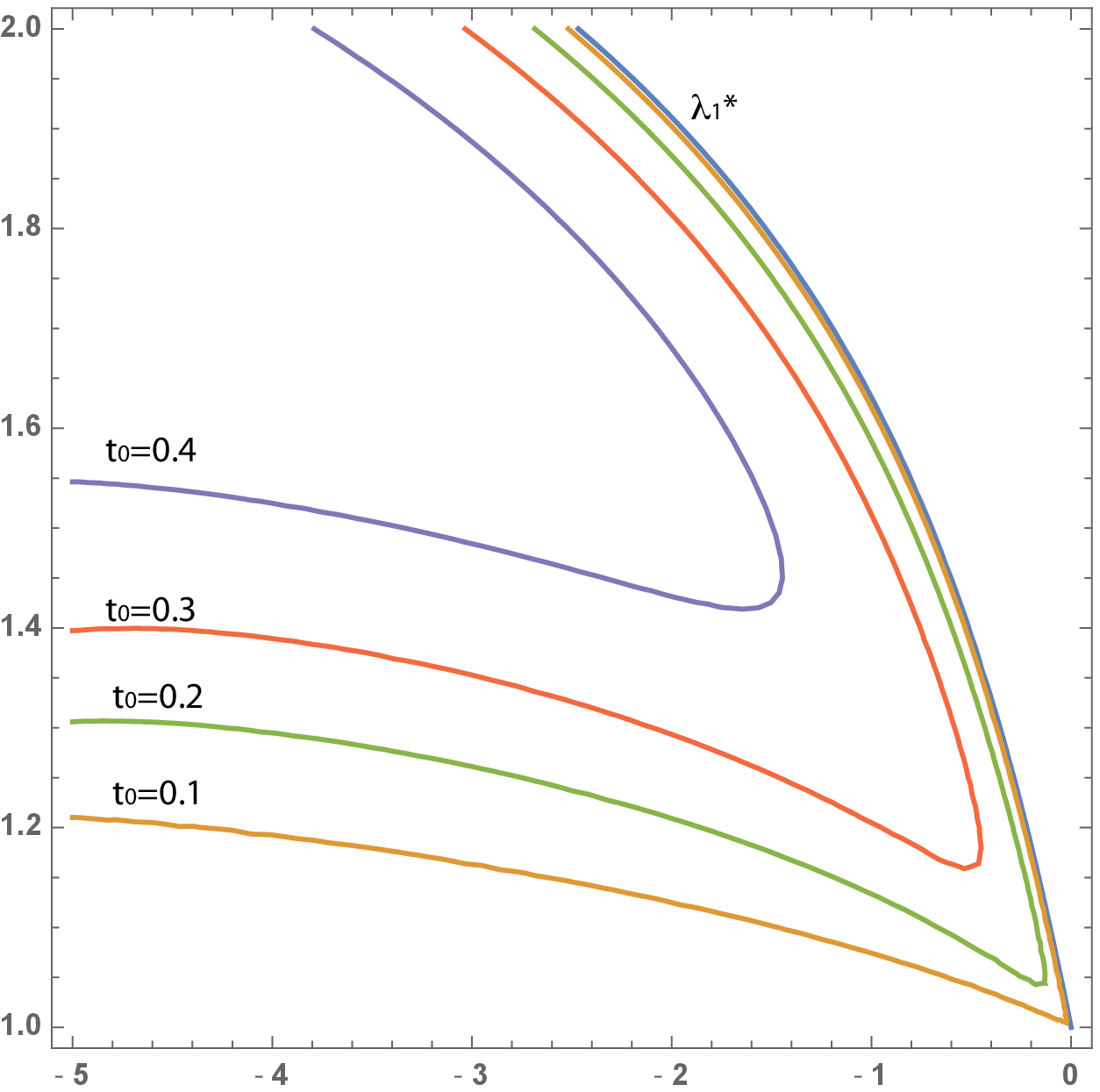}
	\caption{Graph of the first zeros of \eqref{01} for some $0<t<1/2$ and $1< \alpha \le 2$.\label{eigenvalue-1}}
\end{figure}

\begin{figure}[h!]
	\centering
	\includegraphics{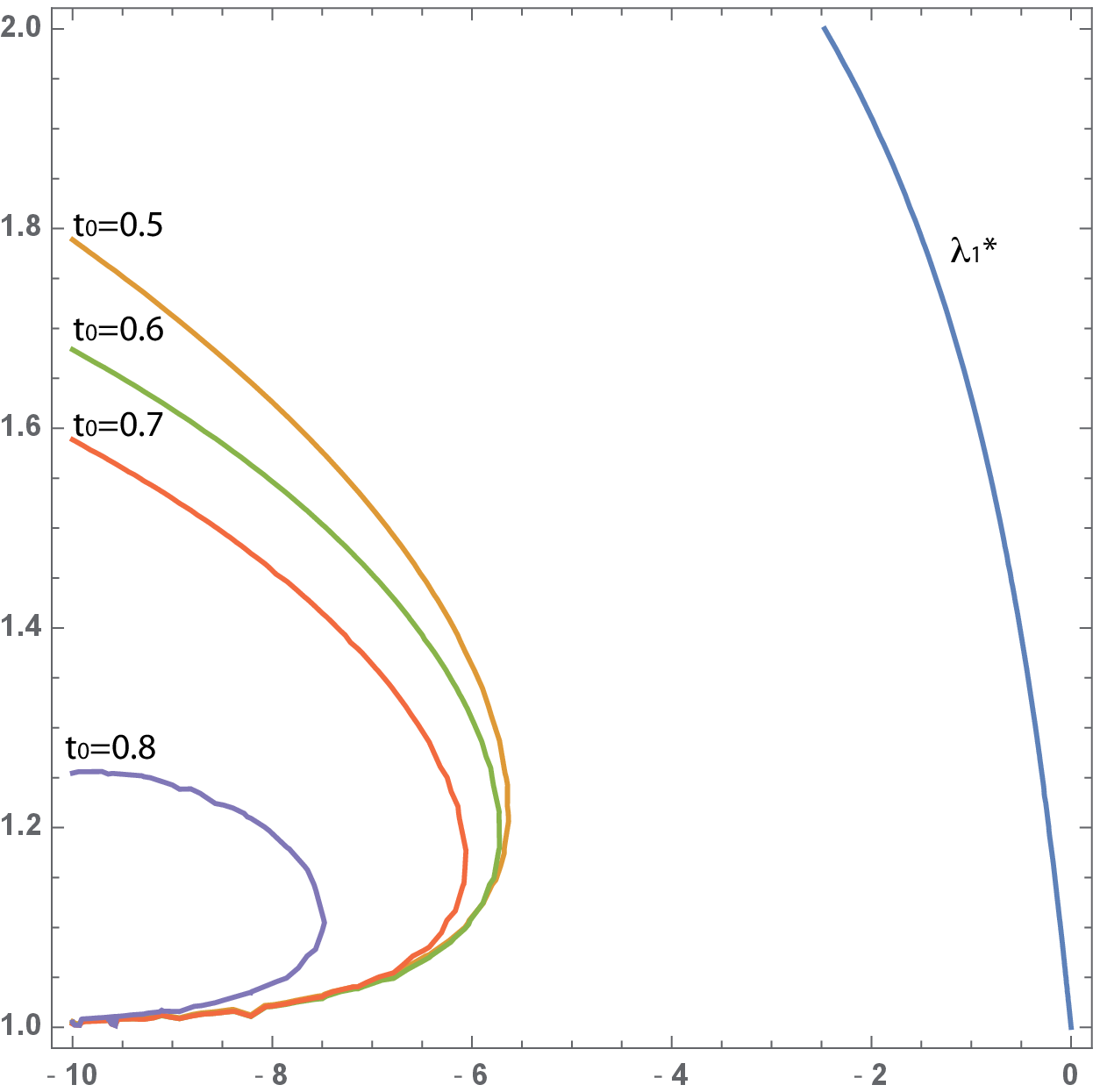}
	\caption{Graph of the first zeros of \eqref{01} for some $1/2<t<1$ and $1< \alpha \le 2$.\label{eigenvalue-2}}
\end{figure}

Now, suppose that $$ G_{\lambda_{2}} (t_{0},s_{0})=0 \ \ \textrm{ for} \ \ t_{0} \leq s_{0}.$$

So, we have that $V(t)=G_{\lambda_{2}} (t,s_{0}) \neq 0$ on $[0,t_{0}]$, satisfies 
\begin{eqnarray}
\begin{cases}
D^{\alpha} V(t)-\lambda_{2} V(t)=0,  \ \ \ \ \ t \in (0,t_{0}),\\
\lim\limits_{t\rightarrow 0^+} t^{2-\alpha} V(t)=V(t_{0})=0.
\end{cases} \label{prrd}
\end{eqnarray}

As it is showed in \cite{ref2}, the eigenvalues of this problem are given as the roots of the following equality:
$$ E_{\alpha,\alpha}(\tilde{\lambda}_{n} t_{0}^\alpha)=0.$$

It is known \cite{ref1} that such equation has a finite number of real roots, all of them negative, for all $\alpha \in (1,2)$.

Moreover, it is not difficult to verify that the biggest root $\lambda_{1}$, of $E_{\alpha,\alpha}(\lambda)=0$, satisfies that $\lambda_{1} < \lambda_{1}^{*}$ (See Figure \ref{fig1})

\begin{figure}[h!]
	\centering
	\includegraphics{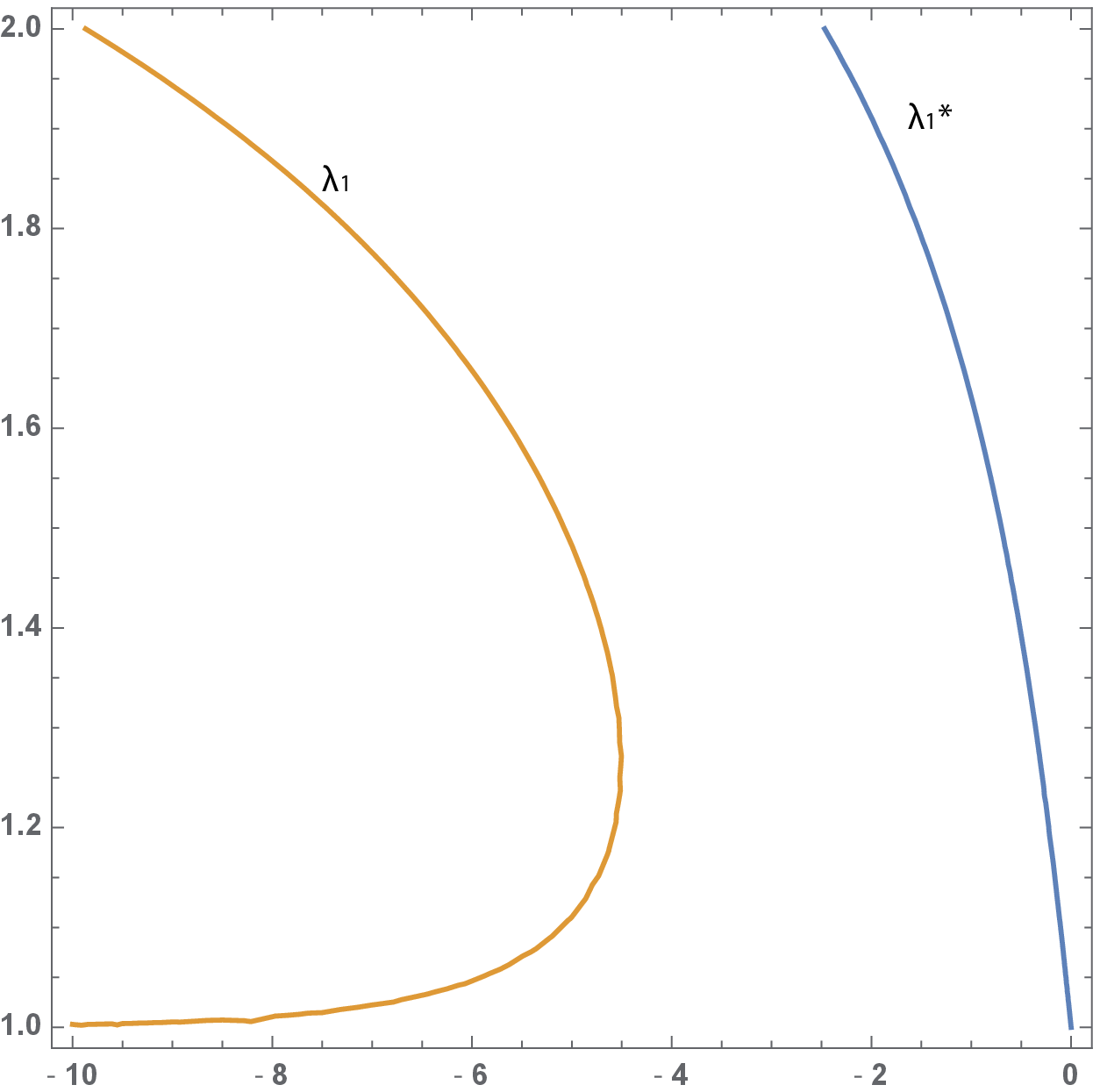}
	\caption{Graph of $\lambda_1$  and $\lambda_1^*$  for $1< \alpha \le 2$.\label{fig1}}
\end{figure}

In particular, the first eigenvalue of (\ref{prrd}) $\tilde{\lambda}_{1}=\frac{\lambda_{1}}{t_{0}^{\alpha}}$ satisfies that 
$$ \tilde{\lambda}_{1} < \lambda_{1} < \lambda_{1}^{*} < \lambda_{2}.$$

Which contradicts the fact that $\tilde{\lambda}_{1}$ is the biggest eigenvalue of problem (\ref{prrd}).

As a consequence, we have proved that if $\lambda > \lambda_{1}^*$ then the Green's function is positive on $(0,1) \times (0,1)$.

If $\lambda < \lambda_{1}^*$ is not an eigenvalue of problem \eqref{pr1}, then, since $\lambda (1-s)^\alpha$ attains all the values of the interval $[\lambda, \lambda (1-s)^\alpha]$, from expression \eqref{G}, it is immediate to verify that the Green's function $G$ changes its sign on the triangle $0 \le t < s \le 1$.
\end{proof}

We remark that for any arbitrary bounded interval $[a,b]$, we can obtain the following result of positivity of Green's function and the validity of a comparison result for the mixed problem.
\begin{corollary}\label{coro-(a,b)}
Let $a<b$, $1< \alpha \leq 2$ and $y \in C([a,b])$. Let $G$ be the Green's function associated to the following problem
\begin{eqnarray}
\label{e-ab}
\begin{cases}
D^{\alpha} u(t)-\lambda u(t)+ y(t)=0,  \quad t \in (a,b),\\
\lim\limits_{t\rightarrow a^+} (t-a)^{2-\alpha}u(t)=u'(b)=0,
\end{cases} 
\end{eqnarray}
and $\lambda_{1}^{*}$ be the first negative root of $E_{\alpha,\alpha-1}(\lambda)=0$. Then 
$$ G(t,s)>0 \ \ for \ all \ t,s\in(a,b)\ \ if \ and \ only \ if \ \ \ \ \lambda > \frac{\lambda_{1}^{*}}{(b-a)^\alpha}.$$

Moreover, if $y\ge 0$, $y \not \equiv 0$, on $(a,b]$, the unique solution $u$ of Problem \eqref{e-ab} satisfies that $u > 0$ on $(a,b]$.
\end{corollary}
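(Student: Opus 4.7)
The natural plan is to reduce problem \eqref{e-ab} on $[a,b]$ to problem \eqref{pr1} on $[0,1]$ by an affine change of variable, and then to transport the conclusions of Theorem \ref{th3} and Lemma \ref{lm2}. Set $\tau=(t-a)/(b-a)\in[0,1]$ and $v(\tau):=u(a+(b-a)\tau)$. Since the boundary condition $\lim_{t\to a^{+}}(t-a)^{2-\alpha}u(t)=0$ forces us to read $D^{\alpha}$ as the Riemann--Liouville derivative based at $a$, a direct substitution $s=a+(b-a)\sigma$ in Definition \ref{def2} produces the scaling identity
$$D^{\alpha}u(t)=(b-a)^{-\alpha}D^{\alpha}v(\tau).$$
Consequently equation \eqref{e-ab} becomes
$$D^{\alpha}v(\tau)-\tilde{\lambda}\,v(\tau)+\tilde{y}(\tau)=0,\qquad \tau\in(0,1),$$
with $\tilde{\lambda}:=\lambda(b-a)^{\alpha}$ and $\tilde{y}(\tau):=(b-a)^{\alpha}y(a+(b-a)\tau)$. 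The two boundary conditions transform cleanly: $(t-a)^{2-\alpha}=(b-a)^{2-\alpha}\tau^{2-\alpha}$ makes the left condition become $\lim_{\tau\to 0^{+}}\tau^{2-\alpha}v(\tau)=0$, and the chain rule gives $u'(b)=(b-a)^{-1}v'(1)$, so the right condition becomes $v'(1)=0$. Thus $v$ solves exactly problem \eqref{pr1} with parameter $\tilde{\lambda}$.

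Next, I would compare the two Green's functions. From the representation $u(t)=\int_{a}^{b}G(t,s)y(s)\,ds$ and $v(\tau)=\int_{0}^{1}\tilde{G}(\tau,\sigma)\tilde{y}(\sigma)\,d\sigma$ given by Theorem \ref{th3}, equating $u(t)=v(\tau)$ after the substitution $s=a+(b-a)\sigma$ and using the definition of $\tilde y$ yields the pointwise identity
$$G(t,s)=(b-a)^{\alpha-1}\,\tilde{G}\!\left(\tfrac{t-a}{b-a},\tfrac{s-a}{b-a}\right).$$
Hence $G$ and $\tilde{G}$ have the same sign on the corresponding open squares. Applying Lemma \ref{lm2} to $\tilde G$, positivity on $(0,1)\times(0,1)$ holds iff $\tilde{\lambda}>\lambda_{1}^{*}$, which is equivalent to $\lambda>\lambda_{1}^{*}/(b-a)^{\alpha}$, proving the first claim.

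For the second assertion, once $G>0$ on $(a,b)\times(a,b)$, the formula $u(t)=\int_{a}^{b}G(t,s)y(s)\,ds$ together with $y\ge 0$, $y\not\equiv 0$ continuous, gives $u(t)>0$ for every $t\in(a,b)$; strict positivity at $t=b$ follows from continuity of $u$ at $b$ (guaranteed by $u\in C^{1}$ on a neighborhood of $b$) combined with the fact that $G(b,s)$ is the non-trivial boundary trace of a strictly positive kernel on the interior, so the integral remains strictly positive.

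I expect the main obstacle to be the scaling identity for the Riemann--Liouville derivative under the affine change of variable: one must be precise about the base point of $D^{\alpha}$ (it is $a$, dictated by the boundary condition) and carry out the substitution inside both the integral and the $n$-th derivative. Once this is cleanly established, everything else is a mechanical transport of Lemma \ref{lm2}.
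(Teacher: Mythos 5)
Your rescaling argument is exactly the route the paper intends: the corollary is stated without an explicit proof, as a remark following Lemma \ref{lm2}, and the later Lemma \ref{l-(a,b)} is proved by the same reduction (``without loss of generality $a=0$, $b=1$''). Your computation of the scaling of the Riemann--Liouville derivative based at $a$, of the boundary conditions, of $\tilde{\lambda}=\lambda(b-a)^{\alpha}$, and of the kernel identity $G(t,s)=(b-a)^{\alpha-1}\tilde{G}\bigl(\tfrac{t-a}{b-a},\tfrac{s-a}{b-a}\bigr)$ (by uniqueness of the representing kernel from Theorem \ref{th3}, since the identity holds for every continuous $y$) is correct, and transporting Lemma \ref{lm2} then gives the stated equivalence. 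In this respect you have actually supplied more detail than the paper does.

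The one weak point is the endpoint $t=b$ in the last assertion. Continuity of $t^{2-\alpha}u(t)$ (or of $u$ near $b$) only yields $u(b)\ge 0$, and the phrase about $G(b,s)$ being a ``non-trivial boundary trace of a strictly positive kernel'' is not an argument: a pointwise limit of strictly positive quantities can vanish, and Lemma \ref{lm2} asserts positivity only on the open square $(0,1)\times(0,1)$. The clean way to close this is Lemma \ref{lm3}: on $[0,1]$ it gives $t^{2-\alpha}G(t,s)\ge m(t)\,s\,(1-s)^{\alpha-2}$ with $m$ continuous and $m(t)>0$ for $t\in(0,1]$, so in particular $G(1,s)\ge m(1)\,s\,(1-s)^{\alpha-2}>0$ for $s\in(0,1)$; after your rescaling this yields $G(b,s)>0$ for $s\in(a,b)$, and then $u(b)=\int_a^b G(b,s)y(s)\,ds>0$ because $y\ge 0$, $y\not\equiv 0$ and continuous on $(a,b]$ forces $y>0$ on some subinterval of $(a,b)$. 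With that substitution for your continuity argument, the proof is complete and matches the paper's (implicit) one.
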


In our approach, we need to prove the following sharp inequalities for the Green's function. 
\begin{lemma}\label{lm3}
Let $G$ be the Green's function associated to problem (\ref{pr1}) given in (\ref{G}), $1< \alpha \leq 2$ and $\lambda > \lambda_{1}^{*}$.\ Then there exists a positive constant $M$ and a continuous function $m$ such that $m(t) > 0$ on $(0,1]$ and $m(0)=0$, for which the following inequalities are fulfilled:
\begin{equation}\label{prop}
m(t)\leq \frac{t^{2-\alpha}G(t,s) }{s (1-s)^{\alpha-2}} \leq M, \ \ \ for \ all \ t,s\in(0,1).
\end{equation}
\end{lemma}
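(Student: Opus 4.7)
The strategy is to substitute the explicit form \eqref{G} into
\[
H(t,s):=\frac{t^{2-\alpha}\,G(t,s)}{s\,(1-s)^{\alpha-2}},
\]
producing two piecewise expressions (one for $s\le t$, one for $s>t$) in which the critical factors $t^{2-\alpha}$ and $(1-s)^{\alpha-2}$ can be absorbed into or cancelled against terms of $G$. I would then prove that $H$ extends to a continuous, strictly positive function on $(0,1]\times[0,1]$, from which both bounds follow by compactness.

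First I would extend $H$ to $s=1$: for fixed $t\in(0,1]$ the pole $(1-s)^{\alpha-2}$ appearing in $G(t,s)$ is exactly cancelled by the denominator of $H$, giving
\[
\lim_{s\to 1^-} H(t,s)=\frac{t\,E_{\alpha,\alpha}(\lambda t^{\alpha})}{\Gamma(\alpha-1)\,E_{\alpha,\alpha-1}(\lambda)}.
\]
This limit is strictly positive for $\lambda>\lambda_1^{*}$, because (i) $E_{\alpha,\alpha-1}(\lambda)>0$ on $(\lambda_1^{*},+\infty)$ since $\lambda_1^{*}$ is its largest negative zero and $E_{\alpha,\alpha-1}(0)=1/\Gamma(\alpha-1)>0$, and (ii) since $\lambda t^\alpha$ ranges over $[\lambda,0]\subset(\lambda_1,+\infty)$ (using $\lambda_1<\lambda_1^*$ from the earlier comparison of first eigenvalues), $E_{\alpha,\alpha}(\lambda t^{\alpha})>0$ on $[0,1]$. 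Next I would extend $H$ to $s=0$: by Lemma \ref{lm1}(ii), $G(t,0)=0$, so a first-order Taylor expansion of $G(t,\cdot)$ at $0$ gives $\lim_{s\to 0^+}H(t,s)=t^{2-\alpha}\,\partial_s G(t,0^+)$, a finite quantity that can be written down explicitly by differentiating \eqref{G} in $s$. Its strict positivity for $t\in(0,1]$ follows from a Hopf-type observation: since $G(t,s)>0$ for $s\in(0,1)$ by Lemma \ref{lm2} and $G(t,0)=0$, we have $\partial_s G(t,0^+)\ge 0$; strict inequality is then checked from the explicit formula using the sign information on $E_{\alpha,\alpha},E_{\alpha,\alpha-1}$ established above (this is the delicate step).

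Once $H$ is continuous and strictly positive on $(0,1]\times[0,1]$, the lower bound is obtained by setting
\[
m(t):=\min_{s\in[0,1]}H(t,s),\qquad t\in[0,1],
\]
with $m(0):=0$. Continuity of $H(t,\cdot)$ and compactness of $[0,1]$ make the minimum attained, and positivity of $H$ on $(0,1]\times[0,1]$ yields $m(t)>0$ for $t>0$; continuity of $m$ on $(0,1]$ follows from Berge's maximum theorem. For the fact that $m(0)=0$ and that $m$ is continuous at $0$, note that for any fixed $s\in(0,1)$ the expression in the case $s>t$ gives $H(t,s)=\frac{t}{s}\cdot\frac{E_{\alpha,\alpha}(\lambda t^{\alpha})E_{\alpha,\alpha-1}(\lambda(1-s)^{\alpha})}{E_{\alpha,\alpha-1}(\lambda)}\to 0$ as $t\to 0^{+}$, so $m(t)\le H(t,s_{0})\to 0$ for any chosen $s_0\in(0,1)$.

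For the upper bound I would split according to the two regions. When $s\ge t$, $H(t,s)=\tfrac{t}{s}\,\Phi(t,s)$ with $\Phi$ continuous and bounded on $[0,1]^{2}$, so $H\le \|\Phi\|_\infty$. When $s\le t$, the two terms of $H$ individually blow up like $1/s$ as $s\to 0^+$, but their leading singularities cancel exactly (this is a restatement of $G(t,0)=0$), and a uniform Taylor estimate of the form $|G(t,s)|\le s\,\sup_{\sigma}|\partial_s G(t,\sigma)|$ together with bounds on $\partial_s G$ obtained from \eqref{G} gives a bound independent of $t,s$. Combined with continuity of the extension of $H$ on $(0,1]\times[0,1]$, this yields a finite $M$ with $H(t,s)\le M$ on $(0,1)\times(0,1)$. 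The main obstacle I expect is the strict-positivity verification of $\partial_s G(t,0^{+})$ for every $t\in(0,1]$, which demands a careful bookkeeping of Mittag--Leffler values and their derivatives and accounts for the technical heart of the argument.
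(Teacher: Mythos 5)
Your proposal follows essentially the same route as the paper: it regularizes the kernel by the same quantity $H(t,s)=t^{2-\alpha}(1-s)^{2-\alpha}G(t,s)/s$, extends it continuously to the closed square (the paper computes the $s\to 0^{+}$ limit explicitly as a Mittag--Leffler expression $L(t)$, which is exactly your $t^{2-\alpha}\partial_s G(t,0^{+})$, and likewise only asserts its strict positivity on $(0,1]$), and then takes $m(t)=\min_{s\in[0,1]}\tilde H(t,s)$ and $M=\max_{I\times I}\tilde H$. So the argument is correct and matches the paper's proof, with only cosmetic differences in how the boundary limits and the upper bound are packaged.
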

\begin{proof}
Define $$H(t,s)=\frac{t^{2-\alpha}(1-s)^{2-\alpha}G(t,s) }{s}$$
which is a continuous function on $I\times (0,1]$.\\
 In addition, for $t \in I$, we have
 \begin{eqnarray*}
\lim_{s \rightarrow 0^{+}} H(t,s) &= & \lim_{s \rightarrow 0^{+}} t^{2-\alpha} \frac{e_{\alpha}^{\lambda t} E_{\alpha,\alpha-1}(\lambda(1-s)^\alpha)-(1-s)^{2-\alpha}E_{\alpha,\alpha-1}(\lambda)e_{\alpha}^{\lambda (t-s)} }{s \ E_{\alpha,\alpha-1}(\lambda)}\\
&= &  \frac{t^{\alpha} \lambda \left( (\alpha-1) E_{\alpha,2 \alpha}(\lambda t^\alpha) -E_{\alpha,2 \alpha-1}(\lambda t^\alpha)          \right)-((1-\alpha)+(\alpha-2)t)E_{\alpha,\alpha-1}(\lambda ) E_{\alpha,\alpha}(\lambda t^\alpha)}{E_{\alpha,\alpha-1}(\lambda)} \\
& & -\frac{t \lambda E_{\alpha,\alpha}(\lambda t^\alpha) \left(  E_{\alpha,2 \alpha-2}(\lambda ) -(\alpha-2)E_{\alpha,2 \alpha-1}(\lambda )          \right)}{E_{\alpha,\alpha-1}(\lambda)}\\
&= & L(t).
\end{eqnarray*}
So, $L$ exists, is finite and $L(t)>0$ for all $t \in (0,1]$. \\
Thus, $H$ is extended by continuity to $I\times I$ as follows
\begin{eqnarray*}
\tilde{H}(t,s)=
\begin{cases}
H(t,s),  \ \ \  \ \ t, s \in I\times (0,1],\\
L(t),  \ \ \  \ \ \ \ \  t \in I, \ s=0.
\end{cases} 
\end{eqnarray*}
As a direct consequence, we have that $$m(t)=\min_{s\in I} \tilde{H}(t,s)$$
is a continuous function on $I$, $m(0)=0$ and $m(t) >0$ for all $t \in (0,1]$. Moreover, for all $t,s \in I$, we obtain 
$$ 0 \leq m(t) \leq \tilde{H}(t,s) \leq M,$$
where $M= \max\limits_{(t,s)\in I\times I} \tilde{H}(t,s)$.
\end{proof}

\section{Existence and uniqueness of positive solutions}
In this section, we will be concerned with the existence and uniqueness of positive solutions to the nonlinear problem (\ref{pr}). To this end, we apply a variant of the classical Krasnoselskii's fixed point theorem proved in \cite{ref5}.

Let $E$ be a real Banach space ordered by the cone $E_{+}$.\ An ordered interval is defined as
$$[x,y]=\lbrace z \in E: \ x\leq z \leq y \rbrace .$$ 

For any $r>0$, we denote $\Omega_{r}=\lbrace x \in E: \ \Vert x \Vert <r \rbrace$ and $\partial\Omega_{r}=\lbrace x \in E: \ \Vert x \Vert =r \rbrace$. Letting $u_{0} \in E_{+}$ with $\Vert u_{0} \Vert \leq 1$, define the subcone $\textit{P}_{u_{0}}$ on the Banach space $E$ as follows 
\begin{equation}\label{cone}
\textit{P}_{u_{0}}= \lbrace  x \in E_{+}, \ x \geq \Vert x \Vert u_{0} \rbrace.
\end{equation}

Our results are based on the following fixed point theorem
\begin{theorem}\label{th00}(\cite[Theorem 2.1]{ref5})
Assume that $E$ is an ordered Banach space with the order cone $E_{+}$.\ Let $0 \leq u_{0} \leq \varphi$ be such that $\Vert u_{0} \Vert \leq 1$, $\Vert \varphi \Vert =1$ satisfying the condition 
\begin{equation}\label{con}
 if \ u \in E_{+} \; \mbox{and}\; \Vert u \Vert \leq 1, \ \ then \ \ u \leq \varphi .
 \end{equation}
If there exist positive numbers $0< a<b$ such that $T: \textit{P}_{u_{0}}\cap(\overline{\Omega_{b}} \backslash \Omega_{a})\longrightarrow 
\textit{P}_{u_{0}}$ is a completely continuous operator and the conditions
$$ \Vert Tu \Vert_{u \in  [a u_{0}, a \varphi]} \leq a \ \ and \ \ \Vert Tu \Vert_{u \in  [b u_{0}, b \varphi]} \geq b                  $$
or 
$$  \Vert Tu \Vert_{u \in  [a u_{0}, a \varphi]} \geq a \ \ and \ \ \Vert Tu \Vert_{u \in  [b u_{0}, b \varphi]} \leq b                       $$
are satisfied, then $T$ has at least one fixed point $u \in [a u_{0}, b \varphi]$.
\end{theorem}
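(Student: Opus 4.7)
The plan is to derive this as a refinement of the classical Krasnosel'ski\u{\i} cone compression--expansion theorem, applied not to the ambient order cone $E_+$ but to the smaller cone $P_{u_0}$, and to translate the hypotheses expressed on the order intervals $[au_0,a\varphi]$ and $[bu_0,b\varphi]$ into the norm--sphere conditions of the classical statement. The first thing I would check is that $P_{u_0}$ is itself a cone: closedness and convexity follow because $\{x\in E_+:x\ge \|x\|u_0\}$ is cut out by continuous linear and lattice operations, and $P_{u_0}\cap(-P_{u_0})=\{0\}$ since $P_{u_0}\subset E_+$. Thus the classical theorem is applicable on $P_{u_0}$, and it remains to match hypotheses.

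The crucial observation is the following inclusion: for every $u\in P_{u_0}$ with $\|u\|=r>0$, one has $u\in[ru_0,r\varphi]$. The lower estimate $u\ge ru_0$ is the very definition of $P_{u_0}$. For the upper estimate, note that $v:=u/r$ belongs to $E_+$ with $\|v\|=1\le 1$, so hypothesis (\ref{con}) forces $v\le\varphi$, i.e.\ $u\le r\varphi$. In particular, the set of elements of $P_{u_0}\cap\partial\Omega_r$ is contained in the order interval $[ru_0,r\varphi]$. Consequently, the hypothesis $\|Tu\|_{u\in[au_0,a\varphi]}\le a$ implies $\|Tu\|\le a=\|u\|$ for every $u\in P_{u_0}\cap\partial\Omega_a$, and similarly the inequality on $[bu_0,b\varphi]$ gives $\|Tu\|\ge b=\|u\|$ for every $u\in P_{u_0}\cap\partial\Omega_b$ (and symmetrically in the other case).

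With $T:P_{u_0}\cap(\overline{\Omega_b}\setminus\Omega_a)\to P_{u_0}$ completely continuous and the pair of norm inequalities on $P_{u_0}\cap\partial\Omega_a$ and $P_{u_0}\cap\partial\Omega_b$ established, the classical Krasnosel'ski\u{\i} cone compression--expansion theorem (applied inside $P_{u_0}$) yields a fixed point $u^\star\in P_{u_0}\cap(\overline{\Omega_b}\setminus\Omega_a)$, i.e.\ $a\le\|u^\star\|\le b$. To recover the conclusion $u^\star\in[au_0,b\varphi]$, use the inclusion observed above together with the positivity of $u_0$ and $\varphi$: from $u^\star\ge\|u^\star\|u_0\ge au_0$ and $u^\star\le\|u^\star\|\varphi\le b\varphi$ the desired order bounds follow.

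The main delicate point I anticipate is precisely the quantitative link between ``norm on a sphere'' and ``norm on an order interval''; without condition (\ref{con}) there would be no upper control of elements of $P_{u_0}\cap\partial\Omega_r$ by $r\varphi$, and the translation above would fail. A secondary point is that one must verify compatibility of signs: $\|u_0\|\le 1$ is what makes $P_{u_0}$ non-empty above the zero element (any $\varphi$ with $\varphi\ge u_0$ and $\|\varphi\|=1$ then witnesses the non-triviality), and $u_0\le\varphi$ ensures the order intervals $[ru_0,r\varphi]$ are non-empty for all $r>0$. Once these structural checks are in place, the proof reduces to citing Krasnosel'ski\u{\i}'s theorem on the cone $P_{u_0}$.
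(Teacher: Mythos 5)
Your proposal is correct, and it is worth noting that the paper itself contains no proof of Theorem \ref{th00}: the result is quoted directly from \cite{ref5}, so what you have written is a genuinely independent, self-contained derivation rather than a reconstruction of an argument in the text. Your route reduces the statement to the classical Guo--Krasnosel'ski\u{\i} compression--expansion theorem on the subcone $P_{u_0}$, and the key step is exactly the one you isolate: for $u\in P_{u_0}$ with $\|u\|=r>0$, the defining inequality gives $u\ge r u_0$, while condition \eqref{con} applied to $u/r$ gives $u\le r\varphi$, so $P_{u_0}\cap\partial\Omega_r\subset[r u_0,r\varphi]$; hence the order-interval hypotheses imply the usual norm inequalities on $P_{u_0}\cap\partial\Omega_a$ and $P_{u_0}\cap\partial\Omega_b$, the classical theorem yields a fixed point $u^\star$ with $a\le\|u^\star\|\le b$, and the same inclusion with $r=\|u^\star\|$, together with $u_0,\varphi\ge 0$, places $u^\star$ in $[a u_0,b\varphi]$. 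Compared with the order-interval/fixed-point-index formulation of \cite{ref5}, your argument buys economy: under the extra structural condition \eqref{con} the ``new'' theorem collapses to the classical one on a smaller cone, which is all this paper actually needs. Two small touch-ups: the claim that $P_{u_0}$ is a closed convex cone should not be justified by ``linear and lattice operations'' (the norm is not linear and $E$ need not be a lattice); closedness follows from closedness of $E_+$ and continuity of the norm, and convexity from the triangle inequality together with $u_0\ge 0$, since $t x+(1-t)y\ge\left(t\|x\|+(1-t)\|y\|\right)u_0\ge\|t x+(1-t)y\|\,u_0$. Also, the non-vacuousness of the sets $P_{u_0}\cap\partial\Omega_r$ is witnessed by $\varphi$ itself, because $\varphi\ge u_0=\|\varphi\|u_0$, rather than by the condition $\|u_0\|\le 1$.
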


To this end, define the operator $T: C_{2-\alpha}(I) \longrightarrow  C_{2-\alpha}(I)$ by
\begin{equation}\label{oper}
Tu(t)= \int_{0}^{1} G(t,s) f(s,s^{2-\alpha}u(s))ds, \ \ \ \ \ t \in I.
\end{equation}
where $G$ is defined in \eqref{G}.

Consider the cone $ \textit{P}_{u_{0}} \subset C_{2-\alpha}(I)$, defined as:
\begin{equation}\label{cone1}
 \textit{P}_{u_{0}}:= \lbrace  u \in C_{2-\alpha}(I), \ u(t) \geq u_{0}(t) \Vert u \Vert_{2-\alpha}, \ t \in I \rbrace
\end{equation}
with $u_{0}(t)= t^{\alpha-2} \frac{m(t)}{M}$, $t \in I$, and $m(t)$ and $M$ defined in Lemma \ref{lm3}.

It is clear that  for $t \in I$, we have that
$t^{2-\alpha} u_{0}(t)= \frac{m(t)}{M} \in C(I)$, i. e. $ u_{0} \in C_{2-\alpha}(I)$. Moreover $\Vert u_{0} \Vert_{2-\alpha} = \max \lbrace  t^{2-\alpha} u_{0}(t), \ \ t \in I      \rbrace \leq 1$. 

In the remainder of the paper, we assume the following hypothesis:

\begin{itemize}
	\item [$(H)$]  $f : I\times [0, \infty)\longrightarrow [0,\infty)$ is a continuous function.
\end{itemize}

Hereinafter, we use the following notations
$$ f_{0}= \lim_{u\rightarrow 0^+} \lbrace  \min_{t\in I} \frac{f(t,u)}{u}  \rbrace \ \ \textrm{and} \ \  f_{\infty}= \lim_{u\rightarrow \infty} \lbrace  \min_{t\in I} \frac{f(t,u)}{u}  \rbrace, $$  
and 
$$ f^{0}= \lim_{u\rightarrow 0^+} \lbrace  \max_{t\in I} \frac{f(t,u)}{u}  \rbrace \ \ \textrm{and} \ \  f^{\infty}= \lim_{u\rightarrow \infty} \lbrace  \max_{t\in I} \frac{f(t,u)}{u}  \rbrace. $$ 

\begin{lemma}\label{op1}
Assume that (H) holds and $\lambda > \lambda_{1}^{*}$.\ Then $T: \textit{P}_{u_{0}} \longrightarrow \textit{P}_{u_{0}}$ is a completely continuous operator.
\end{lemma}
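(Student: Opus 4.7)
The plan is to verify the three standard ingredients: invariance of the cone $P_{u_0}$ under $T$, continuity of $T$, and compactness of $T$ on bounded subsets of $P_{u_0}$. Throughout, the key analytic input will be the two-sided estimate of Lemma \ref{lm3}, which controls $t^{2-\alpha}G(t,s)$ by the integrable profile $s(1-s)^{\alpha-2}$, together with properties (iv)--(v) of Lemma \ref{lm1}.

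\textbf{Invariance.} First I would show $T(P_{u_0}) \subset P_{u_0}$. Since $\lambda>\lambda_1^*$, by Lemma \ref{lm2} one has $G(t,s)>0$ on $(0,1)\times(0,1)$, and by hypothesis $(H)$ the integrand $f(s,s^{2-\alpha}u(s))$ is non-negative and continuous. Hence $Tu \geq 0$. Using Lemma \ref{lm3}, for every $t\in I$,
\begin{equation*}
m(t)\,s(1-s)^{\alpha-2}\;\leq\;t^{2-\alpha}G(t,s)\;\leq\;M\,s(1-s)^{\alpha-2},
\end{equation*}
so that, writing $\Phi(u):=\int_{0}^{1} s(1-s)^{\alpha-2}\,f(s,s^{2-\alpha}u(s))\,ds$, I obtain
\begin{equation*}
m(t)\,\Phi(u)\;\leq\;t^{2-\alpha}Tu(t)\;\leq\;M\,\Phi(u).
\end{equation*}
Taking the maximum in $t$ gives $\|Tu\|_{2-\alpha}\leq M\,\Phi(u)$, whence
\begin{equation*}
t^{2-\alpha}Tu(t)\;\geq\;m(t)\,\Phi(u)\;\geq\;\frac{m(t)}{M}\,\|Tu\|_{2-\alpha}
\;=\;t^{2-\alpha}u_{0}(t)\,\|Tu\|_{2-\alpha},
\end{equation*}
which is exactly $Tu(t)\geq u_{0}(t)\|Tu\|_{2-\alpha}$. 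This also shows $\Phi(u)<\infty$ on bounded sets, since $\alpha>1$ makes $s(1-s)^{\alpha-2}$ integrable and $f$ is bounded on compacts.

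\textbf{Continuity.} Given $u_n\to u$ in $C_{2-\alpha}(I)$, the functions $s^{2-\alpha}u_n(s)$ converge uniformly to $s^{2-\alpha}u(s)$, so by continuity of $f$, $f(s,s^{2-\alpha}u_n(s))\to f(s,s^{2-\alpha}u(s))$ uniformly in $s$, and in particular is uniformly bounded by some constant $K$. Then $|t^{2-\alpha}(Tu_n(t)-Tu(t))|\leq M \int_0^1 s(1-s)^{\alpha-2}|f(s,s^{2-\alpha}u_n(s))-f(s,s^{2-\alpha}u(s))|\,ds$, which tends to $0$ uniformly in $t$ by dominated convergence (the majorant $2KM s(1-s)^{\alpha-2}$ is integrable).

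\textbf{Compactness.} For a bounded set $B\subset P_{u_0}$, uniform boundedness of $\{t^{2-\alpha}Tu:\ u\in B\}$ in $C(I)$ follows from the upper estimate above together with $f$ being bounded on the relevant range. For equicontinuity, the natural route is to bound
\begin{equation*}
\Bigl|\,t_{1}^{2-\alpha}Tu(t_{1})-t_{2}^{2-\alpha}Tu(t_{2})\,\Bigr|
\;\leq\;\int_{0}^{1}\Bigl|\tfrac{\partial}{\partial t}\bigl(t^{2-\alpha}G(t,s)\bigr)\Bigr|\,ds\cdot\|f(\cdot,\cdot)\|_{\infty}\,|t_{1}-t_{2}|
\end{equation*}
using property (v) of Lemma \ref{lm1}, which guarantees the $s$-integral of $|\partial_t(t^{2-\alpha}G(t,s))|$ is finite on $I$. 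The main obstacle is to make this derivative bound uniform in $t$: one must revisit the explicit expression \eqref{G}, differentiate in $t$, and check that the integrable singularity at $s=1$ coming from the factor $(1-s)^{\alpha-2}$ is locally $L^1$ uniformly in $t$. Once this uniform integrability is established, Arzelà--Ascoli applied to $\{t^{2-\alpha}Tu:u\in B\}\subset C(I)$ yields relative compactness in $C(I)$, hence relative compactness of $T(B)$ in $C_{2-\alpha}(I)$. Combined with the first two steps, this gives the claim.
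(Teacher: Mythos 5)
Your cone-invariance step, the uniform bound via $M_0 M/(\alpha(\alpha-1))$, and the continuity argument are fine and match the paper's reasoning (the paper's invariance chain is the same two-sided estimate of Lemma \ref{lm3}, just arranged slightly differently). The gap is in the compactness step, which is exactly where the paper does its real work. Your displayed inequality
\[
\bigl|t_{1}^{2-\alpha}Tu(t_{1})-t_{2}^{2-\alpha}Tu(t_{2})\bigr|\le \int_{0}^{1}\Bigl|\tfrac{\partial}{\partial t}\bigl(t^{2-\alpha}G(t,s)\bigr)\Bigr|\,ds\;\|f\|_{\infty}\,|t_{1}-t_{2}|
\]
does not follow from Lemma \ref{lm1}(v): that item only asserts finiteness of the $s$-integral for each fixed $t$, whereas a mean-value estimate of this type needs control of $\sup_{t}\bigl|\partial_t\bigl(t^{2-\alpha}G(t,s)\bigr)\bigr|$ inside the $s$-integral, and this quantity is not even finite pointwise: differentiating the second branch of \eqref{G} produces a term behaving like $(t-s)^{\alpha-2}$, which blows up as $t\downarrow s$ when $\alpha<2$. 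So the obstruction you flag (the $(1-s)^{\alpha-2}$ singularity at $s=1$) is not the delicate one; the diagonal singularity $s=t$ is, and it is precisely what rules out a uniform Lipschitz-in-$t$ bound and forces a different argument. You then defer the needed verification (``once this uniform integrability is established\dots''), so the equicontinuity claim is asserted rather than proved.

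Two ways to close it: (a) the paper's route, which splits $t_2^{2-\alpha}Tu(t_2)-t_1^{2-\alpha}Tu(t_1)$ into five integrals using the explicit form \eqref{G}, exploits the uniform continuity on $I\times I$ of $(t,s)\mapsto t\,E_{\alpha,\alpha}(\lambda t^{\alpha})E_{\alpha,\alpha-1}(\lambda(1-s)^{\alpha})/E_{\alpha,\alpha-1}(\lambda)$ and of $(t,s)\mapsto t^{2-\alpha}(t-s)^{\alpha-1}E_{\alpha,\alpha}(\lambda(t-s)^{\alpha})$, and bounds the leftover piece over $[t_1,t_2]$ directly by a constant times $(t_2-t_1)^{\alpha}$; or (b) a repaired version of your idea: write $t_1^{2-\alpha}G(t_1,s)-t_2^{2-\alpha}G(t_2,s)=\int_{t_1}^{t_2}\partial_t\bigl(t^{2-\alpha}G(t,s)\bigr)\,dt$ (the map is absolutely continuous in $t$ since $(t-s)^{\alpha-2}$ is integrable in $t$), apply Fubini, and prove $\sup_{t\in I}\int_{0}^{1}\bigl|\partial_t\bigl(t^{2-\alpha}G(t,s)\bigr)\bigr|\,ds<\infty$ — a strictly stronger statement than Lemma \ref{lm1}(v) that must be checked from \eqref{G}, handling both the diagonal and the $s=1$ singularities. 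Either way, that verification is the missing content of your proof.
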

\begin{proof}
Notice from Lemmas \ref{lm1} and \ref{lm2} that, for $u \in \textit{P}_{u_{0}}$, we have that $Tu(t) \geq 0$, for all $ t \in I$ and $\Vert Tu \Vert_{2-\alpha} < \infty$.\\
Now, let $u \in \textit{P}_{u_{0}}$ then, for all $t \in I$,
\begin{eqnarray*}
t^{2-\alpha} Tu(t) &= & \int_{0}^{1}t^{2-\alpha} G(t,s)f(s,s^{2-\alpha}u(s))ds\\
& \geq & m(t) \int_{0}^{1} s (1-s)^{\alpha-2}f(s,s^{2-\alpha}u(s))ds\\
& \geq & \frac{m(t)}{M} \int_{0}^{1} \max\limits_{t \in I}\lbrace t^{2-\alpha} G(t,s)\rbrace f(s,s^{2-\alpha}u(s))ds\\
& \geq & \frac{m(t)}{M} \max\limits_{t \in I} \left\{ t^{2-\alpha} \int_{0}^{1} G(t,s)f(s,s^{2-\alpha}u(s))ds\right\} \\
& = & \frac{m(t)}{M} \Vert Tu \Vert_{2-\alpha}.
\end{eqnarray*}

Next, we will show that $T$ is uniformly bounded. Let $\Omega_{1}\subset \textit{P}_{u_{0}}$ be bounded set of $\textit{P}_{u_{0}}$, then there exists a positive constant $L>0$ such that $\Vert u \Vert_{2-\alpha} \leq L$. \\

Let $$ M_{0}= \max\limits_{t \in I, x \in [0,L]} \vert f(t,x) \vert +1.$$

Then, by $(H)$ and Lemma \ref{lm3}, we have for all $u \in \Omega_{1}$ and $ t \in I$
\begin{eqnarray*}
\vert t^{2-\alpha} Tu(t) \vert & \leq & M_{0} \int_{0}^{1}t^{2-\alpha} G(t,s)ds\\
& \leq & M_{0} M \int_{0}^{1} s (1-s)^{\alpha-2} ds =\frac{M_{0} M }{\alpha (\alpha-1)}.
\end{eqnarray*}

Hence, $T( \Omega_{1})$ is bounded.\\

Now, let us prove that $T( \Omega_{1})$ is equicontinuous in $C_{2-\alpha}(I)$.\\

For any $t_{1}, t_{2} \in I$ such that $t_{1}\leq t_{2}$ and $u \in \Omega_{1}$, we have
\begin{eqnarray*}
\vert t_{2}^{2-\alpha} Tu(t_{2})- t_{1}^{2-\alpha} Tu(t_{1}) \vert &= &  \big\vert  \int_{0}^{1}t_{2}^{2-\alpha} G(t_{2},s)f(s,s^{2-\alpha}u(s))ds- \int_{0}^{1}t_{1}^{2-\alpha} G(t_{1},s)f(s,s^{2-\alpha}u(s))ds \\
& \leq & \scriptstyle{\int_{0}^{t_{1}} \left|  \frac{ t_{2} E_{\alpha, \alpha}(\lambda t_{2}^{\alpha}) E_{\alpha, \alpha-1}(\lambda (1-s)^\alpha)}{E_{\alpha, \alpha-1}(\lambda)}- \frac{ t_{1} E_{\alpha, \alpha}(\lambda t_{1}^{\alpha}) E_{\alpha, \alpha-1}(\lambda (1-s)^\alpha)}{ E_{\alpha, \alpha-1}(\lambda)} \right| (1-s)^{\alpha -2} f(s,s^{2-\alpha}u(s))ds }\\
&  &+   \scriptstyle{\int_{0}^{t_{1}} \left|t_{1}^{2-\alpha} (t_{1}-s)^{\alpha -1}  E_{\alpha, \alpha}(\lambda (t_{1}-s)^{\alpha}) -t_{2}^{2-\alpha}(t_{2}-s)^{\alpha-1}  E_{\alpha, \alpha}(\lambda (t_{2}-s)^{\alpha}) \right| f(s,s^{2-\alpha}u(s))ds } \\
&  & +\scriptstyle{\int_{t_{1}}^{t_{2}} \left|  \frac{ t_{2} E_{\alpha, \alpha}(\lambda t_{2}^{\alpha}) E_{\alpha, \alpha-1}(\lambda (1-s)^\alpha)}{E_{\alpha, \alpha-1}(\lambda)}- \frac{ t_{1} E_{\alpha, \alpha}(\lambda t_{1}^{\alpha}) E_{\alpha, \alpha-1}(\lambda (1-s)^\alpha)}{ E_{\alpha, \alpha-1}(\lambda)} \right| (1-s)^{\alpha -2} f(s,s^{2-\alpha}u(s))ds }\\
 & & +\scriptstyle{t_{2}^{2-\alpha} \int_{t_{1}}^{t_{2}}  (t_{2}-s)^{\alpha -1}  E_{\alpha, \alpha}(\lambda (t_{2}-s)^{\alpha})       f(s,s^{2-\alpha}u(s))ds}\\
 & & + \scriptstyle{\int_{t_{2}}^{1} \left|  \frac{ t_{2} E_{\alpha, \alpha}(\lambda t_{2}^{\alpha}) E_{\alpha, \alpha-1}(\lambda (1-s)^\alpha)}{E_{\alpha, \alpha-1}(\lambda)}- \frac{ t_{1} E_{\alpha, \alpha}(\lambda t_{1}^{\alpha}) E_{\alpha, \alpha-1}(\lambda (1-s)^\alpha)}{ E_{\alpha, \alpha-1}(\lambda)} \right| (1-s)^{\alpha -2} f(s,s^{2-\alpha}u(s))ds }.
\end{eqnarray*}

Note that the function 
$ (t,s) \mapsto \frac{ t E_{\alpha, \alpha}(\lambda t^{\alpha}) E_{\alpha, \alpha-1}(\lambda (1-s)^\alpha)}{E_{\alpha, \alpha-1}(\lambda)}$ is uniformly continuous on $I\times I$.\\

Then, for $\varepsilon >0$ there exists $\delta >0$ such that if $\vert t_{1}-t_{2}\vert < \delta$, we obtain that the first integral is bounded by 
\begin{equation*}
M_{0} \varepsilon \int_{0}^{t_{1}}(1-s)^{\alpha-2} ds= M_{0} \varepsilon \frac{(1-(1-t_{1})^{\alpha-1})}{\alpha-1}\leq \frac{M_{0}}{\alpha-1}\varepsilon.
\end{equation*}

Arguing in an analogous way with the third and the last terms of the inequalities above, we get that the integrals are bounded by 
\begin{equation*}
M_{0} \varepsilon \int_{t_{1}}^{t_{2}}(1-s)^{\alpha-2} ds= M_{0} \varepsilon \frac{((1-t_{1})^{\alpha-1}-(1-t_{2})^{\alpha-1})}{\alpha-1}\leq \frac{M_{0}}{\alpha-1}\varepsilon
\end{equation*}
and 
\begin{equation*}
M_{0} \varepsilon \int_{t_{2}}^{1}(1-s)^{\alpha-2} ds= M_{0} \varepsilon \frac{(1-t_{2})^{\alpha-1}}{\alpha-1}\leq \frac{M_{0}}{\alpha-1}\varepsilon,
\end{equation*}
respectively.\\

Moreover, by the uniformly continuity  on $I\times I$ of function 
\[ (t,s) \mapsto t^{2-\alpha}(t-s)^{\alpha-1}  E_{\alpha, \alpha}(\lambda (t-s)^{\alpha}),\]
 we deduce that the second integral is bounded by $M_{0} \varepsilon$.
 
On the other hand, the function $(t_{2}-s)^{\alpha -1}  E_{\alpha, \alpha}(\lambda (t_{2}-s)^{\alpha})$ is continuous and we have, for a fixed $\lambda$, that 
$$ \vert  E_{\alpha, \alpha}(\lambda (t-s)^{\alpha})\vert \leq M_{1}, \ \forall \ (t,s) \in I\times I, \ \ \alpha \in(1,2].$$

So, 
\begin{equation*}
R= \big\vert \int_{t_{1}}^{t_{2}}  (t_{2}-s)^{\alpha -1}  E_{\alpha, \alpha}(\lambda (t_{2}-s)^{\alpha}) f(s,s^{2-\alpha}u(s))ds \big\vert \leq M_{0} M_{1} \int_{t_{1}}^{t_{2}}  (t_{2}-s)^{\alpha -1} ds=  \frac{ M_{0} M_{1}}{2} (t_{2}-t_{1})^\alpha.   
\end{equation*}

Thus, there exists $\delta >0$ such that if $\vert t_{2}-t_{1}\vert < \delta$, then $R \leq \varepsilon$.\\

So, for $\varepsilon >0$, there exists $\delta >0$ such that if  $\vert t_{2}-t_{1}\vert < \delta$ we deduce that
\begin{equation*}
\vert t_{2}^{2-\alpha} Tu(t_{2})- t_{1}^{2-\alpha} Tu(t_{1}) \vert \leq
\left( \frac{3M_{0}}{\alpha-1} +M_{0}+1  \right)  \varepsilon.
\end{equation*}

Then, operator $t^{2-\alpha} Tu(t)$ is equicontinuous in $C(I).$ And so, $T(\Omega_{1})$ is equicontinuous in $C_{2-\alpha}(I)$.\ By Ascoli's theorem $T (\Omega_{1})$ is a relatively compact in $C_{2-\alpha}(I)$.\\

As a consequence, $T: \textit{P}_{u_{0}} \longrightarrow \textit{P}_{u_{0}}$ is completely continuous operator. \\This completes the proof.
\end{proof}\\

 Now, we are able to show the following existence result.
\begin{theorem}\label{th01}
Assume that condition (H) holds. In addition, if one of the following conditions is satisfied 
\begin{enumerate}
\item[(1)]$f_{0}=\infty$ and $f^{\infty}=0$.
\item[(2)]$f^{0}=0$ and $f_{\infty}=\infty$.
\end{enumerate}
Then, for all $1< \alpha \leq 2$ and $\lambda > \lambda_{1}^{*}$, Problem (\ref{pr}) has at least one positive solution in $\textit{P}_{u_{0}}$.
\end{theorem}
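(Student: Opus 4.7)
The plan is to apply the fixed point Theorem \ref{th00} to the operator $T$ defined in \eqref{oper}. By Lemma \ref{op1}, we already know $T:P_{u_0}\to P_{u_0}$ is completely continuous, so the main work is to choose the dominating element $\varphi$ together with suitable constants $0<a<b$ satisfying the required norm inequalities.

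First I would set $\varphi(t)=t^{\alpha-2}$. Then $\|\varphi\|_{2-\alpha}=1$, and because any $u\in (C_{2-\alpha}(I))_{+}$ with $\|u\|_{2-\alpha}\le 1$ satisfies $t^{2-\alpha}u(t)\le 1=t^{2-\alpha}\varphi(t)$, the admissibility condition \eqref{con} holds. Moreover, since $m(t)\le M$ for all $t\in I$ (Lemma \ref{lm3}), we have $u_0\le \varphi$ and $\|u_0\|_{2-\alpha}\le 1$. For any constant $c>0$ and any $u\in [cu_0,c\varphi]$ the key two-sided bound is
\[
c\,\tfrac{m(s)}{M}\;\le\; s^{2-\alpha}u(s)\;\le\; c,\qquad s\in I,
\]
which will be the source of every estimate.

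For case (1), since $f^\infty=0$, given $\varepsilon>0$ with $\varepsilon\, M/(\alpha(\alpha-1))<1$ there exists $R>0$ with $f(s,x)\le \varepsilon\,x$ for $x\ge R$, while continuity of $f$ bounds $f$ uniformly by some $K$ on $I\times[0,R]$. Using the upper bound in Lemma \ref{lm3} together with $\int_0^1 s(1-s)^{\alpha-2}\,ds=1/(\alpha(\alpha-1))$, for $u\in[bu_0,b\varphi]$ one obtains
\[
t^{2-\alpha}Tu(t)\;\le\;(K+\varepsilon b)\,\frac{M}{\alpha(\alpha-1)},
\]
so $\|Tu\|_{2-\alpha}\le b$ for all sufficiently large $b$. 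Since $f_0=\infty$, for any prescribed constant $\eta>0$ there exists $r>0$ with $f(s,x)\ge\eta\,x$ whenever $0\le x\le r$. Choosing $a\le r$ and using the lower bound of Lemma \ref{lm3} together with $s^{2-\alpha}u(s)\ge a\,m(s)/M$, I get, for $u\in[au_0,a\varphi]$,
\[
t^{2-\alpha}Tu(t)\;\ge\;\frac{\eta\,a}{M}\int_0^1 t^{2-\alpha}G(t,s)\,m(s)\,ds.
\]
Taking the maximum in $t$ of this last expression yields $\|Tu\|_{2-\alpha}\ge a$ provided $\eta$ was fixed large enough (independently of $a$), at which point we shrink $a$ so that $a<b$ and $a\le r$. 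Theorem \ref{th00} then produces a fixed point in $[au_0,b\varphi]$, which is strictly positive on $(0,1]$ because $m>0$ there.

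Case (2) is entirely symmetric, reversing the roles: $f^0=0$ produces small $a$ with $\|Tu\|_{2-\alpha}\le a$ via the same upper bound argument (continuity handles the behaviour of $f$ on compact intervals), and $f_\infty=\infty$ produces large $b$ with $\|Tu\|_{2-\alpha}\ge b$ via the same type of lower bound; one then invokes the other alternative of Theorem \ref{th00}. The main obstacle I anticipate is the asymptotic case $f^\infty=0$ (resp.\ $f_\infty=\infty$), where $u$ need not lie entirely in the region governed by the asymptotic inequality; splitting the integration according to whether $s^{2-\alpha}u(s)$ exceeds $R$ or not, and absorbing the bounded part into an additive constant as above, is what makes the estimate go through.
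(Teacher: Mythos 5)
Your proposal is correct and follows essentially the same route as the paper: the paper's proof also applies Theorem \ref{th00} with the choice $\varphi(t)=t^{\alpha-2}$, relying on Lemma \ref{op1} and Lemma \ref{lm3}, and then simply refers to \cite[Theorem 4.2]{ref2} for the asymptotic estimates at $0$ and $\infty$ that you write out explicitly. The only cosmetic remark is that in the $f_\infty=\infty$ (and $f_0=\infty$) direction one drops, rather than absorbs, the part of the integral where the asymptotic inequality does not apply, restricting to a subinterval where $m$ is bounded below; this is exactly the standard computation the paper delegates to \cite{ref2}.
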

\begin{proof} In this case we take $\varphi(t)=t^{\alpha-2}$. Clearly, $\varphi \in C_{2-\alpha}(I)$ and $\Vert \varphi  \Vert_{2-\alpha}=1$. Moreover, it follows that $0\leq u_{0} \leq \varphi$ and  condition (\ref{con}) is trivially fulfilled.
	
	The rest of the proof is essentially the one given in \cite[Theorem 4.2]{ref2}, and we omit them.
\end{proof}\\

Now, to prove the uniqueness of positive solution, we need to assume that function $f$ satisfies the following condition:\\
\begin{itemize}
	\item [$(H^*)$]  There exists a constant $K >0$ such that 
	\[
	\vert f(t,u)-f(t,v) \vert \leq K \vert u-v \vert \ \ for \ each \ t \in I \ and \ u,v \in \textit{P}_{u_{0}}.
	\] 
\end{itemize}

So, the uniqueness result is the following.
\begin{theorem}\label{thuq}
Assume that (H) and (H$^{*}$) are fulfilled. Then Problem (\ref{pr}) has a unique solution in $\textit{P}_{u_{0}}$ provided that 
\begin{equation}\label{fin}
\frac{K \ M}{\alpha(\alpha-1)} \ <1.
\end{equation}
\end{theorem}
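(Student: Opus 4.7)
The plan is to prove Theorem \ref{thuq} by applying the Banach contraction principle to the operator $T$ defined in \eqref{oper}, viewed as a self-map of the closed cone $\textit{P}_{u_0} \subset C_{2-\alpha}(I)$. Since $C_{2-\alpha}(I)$ is a Banach space and $\textit{P}_{u_0}$ is closed under the norm $\|\cdot\|_{2-\alpha}$, it suffices to check two things: that $T(\textit{P}_{u_0}) \subset \textit{P}_{u_0}$, which is already established in Lemma \ref{op1}, and that $T$ is a strict contraction in the $\|\cdot\|_{2-\alpha}$ norm.

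For the contraction estimate, I would take $u, v \in \textit{P}_{u_0}$ and write
\[
t^{2-\alpha}|Tu(t) - Tv(t)| \leq \int_0^1 t^{2-\alpha} G(t,s) \bigl|f(s, s^{2-\alpha}u(s)) - f(s, s^{2-\alpha}v(s))\bigr|\,ds,
\]
where I have used the positivity of $G$ for $\lambda > \lambda_1^*$ (Lemma \ref{lm2}). Applying (H$^{*}$) to the integrand yields a factor $K \, s^{2-\alpha}|u(s)-v(s)| \leq K\|u-v\|_{2-\alpha}$, so
\[
\|Tu - Tv\|_{2-\alpha} \leq K\|u-v\|_{2-\alpha} \, \max_{t\in I} \int_0^1 t^{2-\alpha} G(t,s)\,ds.
\]

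The remaining step is to bound the integral by $M/(\alpha(\alpha-1))$ uniformly in $t$. This uses the sharp upper bound from Lemma \ref{lm3}, namely $t^{2-\alpha}G(t,s) \leq M s(1-s)^{\alpha-2}$, combined with the Beta integral evaluation
\[
\int_0^1 s(1-s)^{\alpha-2}\,ds = B(2,\alpha-1) = \frac{\Gamma(2)\Gamma(\alpha-1)}{\Gamma(\alpha+1)} = \frac{1}{\alpha(\alpha-1)}.
\]
Therefore $\|Tu - Tv\|_{2-\alpha} \leq \frac{KM}{\alpha(\alpha-1)} \|u-v\|_{2-\alpha}$, and by hypothesis \eqref{fin} this constant is strictly less than $1$. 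Banach's contraction principle on the complete metric space $(\textit{P}_{u_0}, \|\cdot\|_{2-\alpha})$ then delivers a unique fixed point of $T$ in $\textit{P}_{u_0}$, which is by construction the unique solution of Problem \eqref{pr} in this cone.

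There is no real obstacle here: all the nontrivial work is encapsulated in Lemmas \ref{lm2} and \ref{lm3} (positivity and the sharp $M$-bound on $G$) and in the self-mapping property proved in Lemma \ref{op1}. The mildest subtlety is interpreting (H$^{*}$): although it is stated for $u,v \in \textit{P}_{u_0}$, it is used only through real arguments $s^{2-\alpha}u(s)$ and $s^{2-\alpha}v(s)$, both of which remain in the nonnegative range where $f$ is defined, so the Lipschitz estimate applies pointwise without further ado.
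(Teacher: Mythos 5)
Your proposal is correct and follows essentially the same route as the paper: positivity of $G$ for $\lambda>\lambda_1^*$, the bound $t^{2-\alpha}G(t,s)\le M\,s(1-s)^{\alpha-2}$ from Lemma \ref{lm3}, the Beta-integral evaluation $\int_0^1 s(1-s)^{\alpha-2}\,ds=\frac{1}{\alpha(\alpha-1)}$, and the Banach contraction principle on the closed cone $\textit{P}_{u_0}$, with the self-mapping property supplied by Lemma \ref{op1}. The only difference is that you make explicit the completeness of $(\textit{P}_{u_0},\Vert\cdot\Vert_{2-\alpha})$ and the interpretation of (H$^{*}$) through the real arguments $s^{2-\alpha}u(s)$, which the paper leaves implicit.
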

\begin{proof}
To prove the previous result, we apply the Banach fixed point theorem \cite{ref7}.\\
So, let us show that $T$ is a contraction operator in $\textit{P}_{u_{0}}$.\\
Let $t \in I$ and $u,v \in \textit{P}_{u_{0}}$, we have 
\begin{eqnarray*}
\vert t^{2-\alpha} Tu(t)-t^{2-\alpha} Tv(t) \vert & \leq & t^{2-\alpha} \int_{0}^{1} G(t,s) \vert f(s, s^{2-\alpha}\ u(s))-f(s, s^{2-\alpha}\ v(s)) \vert ds\\
&\leq & K M t^{2-\alpha} \int_{0}^{1} s (1-s)^{\alpha-2} \vert s^{2-\alpha}\ u(s)- s^{2-\alpha}\ v(s) \vert ds\\
&\leq & K M \int_{0}^{1} s (1-s)^{\alpha-2} ds\ \Vert u-v \Vert_{2-\alpha}\\
&= & \frac{K \ M}{\alpha(\alpha-1)}  \Vert u-v \Vert_{2-\alpha}.
\end{eqnarray*}
Therefore, $T$ is a contraction operator in $\textit{P}_{u_{0}}$ and we deduce that $T$ has a unique fixed point $u \in \textit{P}_{u_{0}}$, which is a unique solution of problem (\ref{pr}) in such subcone.
\end{proof}

\section{Existence of solutions via nondecreasing operators}

In the sequel we will prove the existence of positive solutions of problem \eqref{pr} by using the following fixed point proved in \cite{ref11} for nondecreasing operators on ordered Banach spaces.
\begin{theorem}\label{th*}(\cite{ref11})
	Let $X$ be a real Banach space, $K$ a normal and solid cone that induces in $X$ the order $\preceq$, and $T: K\rightarrow K$ a nondecreasing and completely continuous operator. Define $$S = \lbrace u \in K: Tu \preceq u  \rbrace $$
	and suppose that 
	\begin{itemize}
		\item[(i)] There exists $\overline{u} \in S$ such that $\overline{u} \in Int(K)$.
		\item[(ii)] $S$ is bounded.
	\end{itemize}
	Then there exists $u \in K$, $u\neq 0$, such that $u=Tu$.
\end{theorem}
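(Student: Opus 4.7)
The plan is to combine a monotone iteration from the upper element $\overline{u}$ with a fixed point index computation on the cone $K$.

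First I would build the iteration. Set $u_0=\overline{u}$ and $u_{n+1}=Tu_n$. Since $\overline{u}\in S$ gives $T\overline{u}\preceq\overline{u}$, the monotonicity of $T$ together with induction yields $u_{n+1}\preceq u_n$ for every $n$; hence $\{u_n\}$ is a $\preceq$-decreasing sequence in $K$ with $0\preceq u_n\preceq\overline{u}$. Normality of $K$ turns this order bound into a norm bound, and complete continuity of $T$ then makes $\{Tu_n\}=\{u_{n+1}\}_{n\ge 1}$ admit a norm-convergent subsequence. Because a monotone sequence in a normal cone with a convergent subsequence must itself converge, $u_n\to u^{*}$, and by continuity of $T$, $Tu^{*}=u^{*}$, giving a fixed point in $[0,\overline{u}]$.

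The delicate step, and the one I expect to be the main obstacle, is ensuring that a fixed point produced in the process is nonzero. Here both hypotheses must be exploited jointly: (ii) controls the ``size'' of prospective fixed points from above, while the interior-ness in (i) prevents $0$ from being isolated as the only fixed point. The natural device is the fixed point index $i(T,K\cap B_\rho,K)$ on cone-balls around $0$. Using the boundedness of $S$ one shows, via a homotopy whose fixed points along the deformation remain in a uniformly bounded region (for instance a $tT$-type or convex-combination homotopy involving $\overline{u}$), that $i(T,K\cap B_R,K)=1$ for $R$ sufficiently large. Using that $\overline{u}\in Int(K)$ one shows, via a translation homotopy of the form $H(s,u)=Tu+s\overline{u}$ that pushes any prospective small fixed point out of a cone-neighborhood of $0$, that $i(T,K\cap B_r,K)=0$ for $r$ sufficiently small. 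The additivity of the index then produces a fixed point of $T$ in $K\cap(B_R\setminus\overline{B_r})$, which is automatically nonzero.

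The hardest part is verifying, along each homotopy, the absence of fixed points on the corresponding sphere, since this is exactly where the strength of the hypotheses must come in: the interior-ness of $\overline{u}$ is what makes the small-ball index vanish, and the boundedness of $S$ is what pins down the large-ball index; neither alone would suffice. Once this index mismatch is in place, the monotone iteration from the first paragraph can be used to identify the resulting nontrivial fixed point as a limit in $[0,\overline{u}]$.
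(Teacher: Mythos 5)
The paper itself states this theorem as a quotation from \cite{ref11} and gives no proof, so I assess your argument on its own terms, against the Krasnosel'skii--type scheme that the cited result rests on. Your first paragraph (the decreasing iteration $u_{n+1}=Tu_n$ from $\overline u$, with normality plus complete continuity giving convergence to a fixed point in $[0,\overline u]$) is correct but, as you admit, may only produce $u^*=0$. The genuine gap is in the two index computations, and in fact both claimed values point the wrong way. What boundedness of $S$ gives is that for $R>\sup\{\Vert u\Vert : u\in S\}$ and $u\in K$ with $\Vert u\Vert=R$ one has $u\notin S$, i.e.\ $Tu\npreceq u$; by the standard lemma (no solutions of $u-Tu=\mu e$, $\mu\ge 0$, $e\in K\setminus\{0\}$ on the boundary) this forces $i(T,K\cap B_R,K)=0$, not $1$. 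Your $tT$--homotopy is not admissible there: a solution of $u=tTu$ with $t\in(0,1)$ satisfies $u\preceq Tu$, a condition that has nothing to do with membership in $S$, so (ii) cannot exclude it. Symmetrically, $\overline u\in Int(K)$ does not give $i(T,K\cap B_r,K)=0$ for small $r$: the zeros of your homotopy $H(s,u)=Tu+s\overline u$ satisfy $u-Tu=s\overline u\in K$, i.e.\ they are precisely points of $S$, which may have arbitrarily small norm. A concrete test: $X=\R$, $K=[0,\infty)$, $Tu=u^2$. Then $S=[0,1]$ is bounded and $\overline u=1/2$ satisfies (i); yet $i(T,K\cap B_r,K)=1$ for $0<r<1$ and $i(T,K\cap B_R,K)=0$ for $R>1$, exactly opposite to your assertion; moreover on $\Vert u\Vert=2$ one has $u=tTu$ with $t=1/2$, and $u=Tu+s\overline u$ has solutions of any small norm $r$ (take $s=2r(1-r)$). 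So the additivity step of your proposal has nothing to stand on.

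The hypotheses are meant to be used as follows. Boundedness of $S$ yields $Tu\npreceq u$ on a large sphere, hence index $0$ on the large ball. Near $0$ there is a dichotomy: either there is some small $r$ with $Tu\nsucceq u$ for all $u\in K$, $\Vert u\Vert=r$, in which case the index on $K\cap B_r$ is $1$ and additivity produces a nonzero fixed point in the annulus; or on every small sphere there is some $u_r$ with $u_r\preceq Tu_r$. It is in this second case that $\overline u\in Int(K)$ really enters: since $B(\overline u,\rho)\subset K$ for some $\rho>0$, every $u\in K$ with $\Vert u\Vert\le\rho$ satisfies $u\preceq\overline u$; thus $u_r$ (for $r\le\rho$) is a lower solution lying below the upper solution $\overline u$, and the increasing iteration $T^n u_r$, confined to $[u_r,\overline u]$, converges (normality plus complete continuity again) to a fixed point $u^*\succeq u_r$, which cannot be $0$ because $u_r\in K\setminus\{0\}$. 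Your proposal is missing this case analysis entirely, and the closing claim that the annulus fixed point can be identified with the limit of your decreasing iteration in $[0,\overline u]$ is unjustified (that fixed point need not lie below $\overline u$) and is not needed.
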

First of all, we define the new cone $P^{*}$ as follows
\begin{equation*}
P^{*} = \lbrace  u \in C_{2-\alpha}(I):\ u(t) \ge 0 \  \textrm{ for all} \ t \in(0,1], \ t^{2-\alpha}u(t) \ge \frac{m_{0}}{M} \Vert u \Vert_{2-\alpha} \ \textrm{on}\ [c_{1},1]                      \rbrace,
\end{equation*}
with $m_{0}= \min\limits_{t\in [c_{1},1]} m(t)$, and $c_{1} \in (0,1)$.\\
The existence result is the following one.
\begin{theorem}\label{th**}
	Suppose that (H) holds, $\lambda > \lambda_1^*$, and the following assumptions hold:
	\begin{itemize}
		\item[(H$_{1}$)] $f(t,\cdot)$ is nondecreasing for each $t \in I$.
		\item[(H$_{2}$)] $f_{\infty}= \lim\limits_{u\rightarrow \infty} \lbrace  \min\limits_{t\in [c_{1},1]} \frac{f(t,u)}{u}  \rbrace =\infty $.
		\item[(H$_{3}$)] There exists $\overline{u} \in C_{2-\alpha}(I)$ such that $D^{\alpha} \overline{u} \in C(0,1]\cap L^1(0,1)$ with \\ $\lim\limits_{t\rightarrow 0^+} t^{2-\alpha}\overline{u}(t)=0, \ \ \overline{u}'(1)=0$ and moreover for $\lambda > \lambda_{1}^{*}$,
		\[  D^{\alpha} \overline{u}(t)- \lambda \overline{u}(t)+ f(t,t^{2-\alpha}\overline{u}(t)) > 0 \ \ \ for \ t \in (0,1).
		\]
	\end{itemize}
	Then Problem (\ref{pr}) has a positive solution on $(P^*)$.
\end{theorem}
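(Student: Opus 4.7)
The plan is to apply Theorem \ref{th*} with $X = C_{2-\alpha}(I)$, $K = P^*$, and $T$ the integral operator \eqref{oper}. Four ingredients have to be verified: $(a)$ $T$ maps $P^*$ into itself, is nondecreasing, and is completely continuous; $(b)$ $P^*$ is a normal and solid cone in $C_{2-\alpha}(I)$; $(c)$ there exists an element $\bar u \in S := \{u \in P^* : Tu \preceq u\}$ lying in $\mathrm{Int}(P^*)$; $(d)$ the set $S$ is bounded.

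For $(a)$, the two-sided estimate \eqref{prop} of Lemma \ref{lm3} is the key tool. Using $t^{2-\alpha} G(t,s) \le M\,s(1-s)^{\alpha-2}$ on $I \times I$ and $t^{2-\alpha} G(t,s) \ge m_0\, s(1-s)^{\alpha-2}$ on $[c_1,1]\times I$, the cone inequality $t^{2-\alpha}(Tu)(t) \ge (m_0/M)\,\|Tu\|_{2-\alpha}$ on $[c_1,1]$ follows by comparing the two bounds inside the integral; monotonicity of $T$ is immediate from $G \ge 0$ (Lemma \ref{lm2}) and \textbf{(H$_1$)}; and complete continuity can be established exactly as in Lemma \ref{op1}, whose argument depends only on \eqref{prop} and hence transfers verbatim to $P^*$. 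For $(b)$, normality of $P^*$ is routine since $P^* \subset C_{2-\alpha}(I)_{+}$ and the norm is monotone on the positive cone, while its interior consists of those $v \in C_{2-\alpha}(I)$ for which $t^{2-\alpha}v(t) > 0$ on $I$ and the cone inequality is strict on $[c_1,1]$.

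For $(c)$, I would invoke Theorem \ref{th3} to write $\bar u(t) = \int_0^1 G(t,s)\,h(s)\,ds$ with $h := \lambda \bar u - D^\alpha \bar u$, which is legal because the boundary conditions in \textbf{(H$_3$)} are precisely those of problem \eqref{pr1}. The strict inequality in \textbf{(H$_3$)} compares $h$ pointwise with $f(\cdot,\cdot^{2-\alpha}\bar u(\cdot))$, and the strict positivity of $G$ on $(0,1)\times(0,1)$ (Lemma \ref{lm2}) then transfers this comparison, via the integral representation, to the order relation between $\bar u$ and $T\bar u$ required by the definition of $S$; strictness of the inequality at the same time places $\bar u$ in $\mathrm{Int}(P^*)$.

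The principal obstacle is $(d)$, the boundedness of $S$, which I would attack by contradiction. If $(u_n) \subset S$ satisfies $\|u_n\|_{2-\alpha} \to \infty$, then membership in $P^*$ yields $s^{2-\alpha}u_n(s) \ge (m_0/M)\,\|u_n\|_{2-\alpha} \to \infty$ uniformly on $[c_1,1]$. Hypothesis \textbf{(H$_2$)} then provides, for any prescribed $N > 0$, an index $n_0$ such that $f(s, s^{2-\alpha}u_n(s)) \ge N\cdot s^{2-\alpha}u_n(s)$ on $[c_1,1]$ for all $n \ge n_0$. Combining $Tu_n \preceq u_n$ with the lower bound in \eqref{prop}, evaluated at a suitable test point $t_* \in [c_1,1]$, I would obtain an estimate of the form $\|u_n\|_{2-\alpha} \ge N\cdot C\cdot \|u_n\|_{2-\alpha}$ for a positive constant $C = C(c_1, m_0, M, \alpha)$, contradicting the choice of $u_n$ as soon as $N > 1/C$. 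The delicate point is extracting the explicit constant $C$: the argument uses \eqref{prop} twice (once as the lower bound on $G$ and once as the $P^*$-cone bound on $u_n$), and both instances must be tight enough on the subinterval $[c_1,1]$ for the contradiction to close.
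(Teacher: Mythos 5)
Your overall skeleton is the same as the paper's: apply Theorem \ref{th*} on the cone $P^*$ to the operator (\ref{oper}); get invariance and monotonicity from (H$_1$), the positivity of $G$ (Lemma \ref{lm2}) and the two-sided bound (\ref{prop}); take complete continuity from Lemma \ref{op1}; produce an element of $S\cap \mathrm{Int}(P^*)$ from (H$_3$) via the Green's function representation; and bound $S$ using (H$_2$) with a constant built from $m_0$, $M$ and $\int_{c_1}^1 s(1-s)^{\alpha-2}\,ds$ (the paper takes $A=M/\bigl(m_0^2\int_{c_1}^{1}s(1-s)^{\alpha-2}ds\bigr)$ and shows directly that $t^{2-\alpha}u(t)>B$ on $[c_1,1]$ forces $Tu\npreceq u$; your sequential contradiction is the same computation).

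The genuine trouble is in your step (c), and it is a question of direction. Writing $\overline{u}(t)=\int_0^1G(t,s)h(s)\,ds$ with $h=\lambda\overline{u}-D^\alpha\overline{u}$ is correct, but (H$_3$) as stated reads $f(t,t^{2-\alpha}\overline{u}(t))>h(t)$, so the positivity of $G$ yields $T\overline{u}\ge\overline{u}$ on $(0,1]$, which is the \emph{opposite} of $T\overline{u}\preceq\overline{u}$, the relation that membership in $S=\{u:Tu\preceq u\}$ requires. Your sketch asserts that the comparison ``transfers to the order relation required by the definition of $S$'' without checking which way it goes; carried out literally, the step fails. (The paper's own proof carries the same tension: it sets $\sigma=D^\alpha\overline{u}-\lambda\overline{u}+f(\cdot,\cdot^{2-\alpha}\overline{u})\ge 0$ and then uses the identity $\overline{u}=T\overline{u}+\int_0^1G(\cdot,s)\sigma(s)ds$, which is consistent only if the differential inequality in (H$_3$) is reversed, i.e.\ $\overline{u}$ is a strict upper-solution-type function; with the inequality as printed the correct identity is $\overline{u}=T\overline{u}-\int_0^1 G(\cdot,s)\sigma(s)ds$.) So you must either work with the reversed inequality or flag the discrepancy explicitly; the direction cannot be left implicit. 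Two further points are glossed over: $T\overline{u}\preceq\overline{u}$ is not only a pointwise inequality --- you must also verify $t^{2-\alpha}(\overline{u}-T\overline{u})(t)\ge\frac{m_0}{M}\Vert\overline{u}-T\overline{u}\Vert_{2-\alpha}$ on $[c_1,1]$, which the paper obtains by running the Lemma \ref{lm3} chain of estimates on $\int_0^1 G(\cdot,s)\sigma(s)ds$; and $\overline{u}\in \mathrm{Int}(P^*)$ does not follow from ``strictness'' of the differential inequality, but from $t^{2-\alpha}\overline{u}>0$ on $(0,1]$ together with a choice of $c_1$ for which $t^{2-\alpha}\overline{u}(t)>\frac{m_0}{M}\Vert\overline{u}\Vert_{2-\alpha}$ on $[c_1,1]$.
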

\begin{proof}
	Notice that in $P^*$  induced the following order in $C_{2-\alpha}(I)$:
	$$	u,v \in C_{2-\alpha}(I), \; u \preceq v \; \mbox{ if and only if } $$
	\[
	v(t) \ge u(t) \, \mbox{for all } t \in (0,1], \;  \mbox{and  } t^{2-\alpha}(v-u)(t) \ge \frac{m_{0}}{M} \Vert v-u \Vert_{2-\alpha} \mbox{for all } t \in [c_{1},1]  .                
	\]
	First, let us prove that $T:P^{*}\rightarrow P^{*}$ is a nondecreasing operator. Using (H$_{1}$), for $t \in I$ and $u,v \in P^{*}$ such that $u \preceq v$, we have
	\begin{eqnarray*}
		t^{2-\alpha}Tu(t)&= & t^{2-\alpha} \int_{0}^{1} G(t,s) f(s,s^{2-\alpha}u(s))ds \\
		& \leq & t^{2-\alpha} \int_{0}^{1} G(t,s) f(s,s^{2-\alpha}v(s))ds\\
		& = & t^{2-\alpha}Tv(t).
	\end{eqnarray*}
	
	Moreover, for every $t \in [c_1,1]$, the following inequalities hold:
	
	\begin{eqnarray*}
		t^{2-\alpha} (T v(t)-Tu(t))& = & t^{2-\alpha} \int_{0}^{1} G(t,s) \left(f(s,s^{2-\alpha}v(s))-f(s,s^{2-\alpha}u(s))\right)ds \\
		& \geq & m(t) \int_{0}^{1} s (1-s)^{\alpha -2} \left(f(s,s^{2-\alpha}v(s))-f(s,s^{2-\alpha}u(s))\right)ds\\
		& \geq & \frac{m(t)}{M}  \int_{0}^{1} \max\limits_{t \in I} \lbrace t^{2-\alpha} G(t,s)\rbrace \left(f(s,s^{2-\alpha}v(s))-f(s,s^{2-\alpha}u(s))\right)ds\\
		& \geq &  \frac{m(t)}{M}  \max\limits_{t \in I} \lbrace  \int_{0}^{1}  t^{2-\alpha} G(t,s)\left(f(s,s^{2-\alpha}v(s))-f(s,s^{2-\alpha}u(s))\right)ds \rbrace \\
		&\geq & \frac{m_{0}}{M} \Vert Tv-Tu \Vert_{2-\alpha}.
	\end{eqnarray*}

	Therefore $T$ is a nondecreasing operator on $P^*$.
	
	By Lemma \ref{op1}, we know that $T:P^{*}\rightarrow P^{*}$ is a completely continuous operator.
	
	Now, we shall prove that $\overline{u} \in S=\lbrace u \in P^{*} : Tu \preceq u  \rbrace$ and $\overline{u} \in Int(P^*)$.
	
	From (H$_{3}$), there exists a nonnegative function $\sigma \in C(0,1]\cap L^1(0,1)$ such that 
	\[
	D^{\alpha} \overline{u}(t)- \lambda \overline{u}(t) + f(t,t^{2-\alpha}\overline{u}(t))=\sigma(t),\ \ \ for \ t \in (0,1),
	\]
	which is equivalent to 
	\[
	\overline{u}(t)=T\overline{u}(t)+\int_{0}^{1} G(t,s) \sigma(s)ds.
	\]
	So, for $\overline{u} \in P^{*}$ and $t \in I$.  Since $\lambda > \lambda_1^*$, we deduce that
	\[
	\overline{u}(t)>T\overline{u}(t) \quad \mbox{for all  }   t \in (0,1].
	\]
	
	Moreover, for $t \in [c_1,1]$, we obtain  
	\begin{eqnarray*}
		t^{2-\alpha} (\overline{u}(t)-T\overline{u}(t))& = & t^{2-\alpha} \int_{0}^{1} G(t,s) \sigma(s)ds \\
		& \geq & m(t) \int_{0}^{1} s (1-s)^{\alpha -2} \sigma(s)ds\\
		& \geq & \frac{m(t)}{M}  \int_{0}^{1} \max\limits_{t \in I} \lbrace t^{2-\alpha} G(t,s)\rbrace \sigma(s)ds\\
		& \geq &  \frac{m(t)}{M}  \max\limits_{t \in I} \lbrace  \int_{0}^{1}  t^{2-\alpha} G(t,s)\sigma(s)ds \rbrace \\
		&\geq & \frac{m_{0}}{M} \Vert \overline{u}-T\overline{u} \Vert_{2-\alpha}.
	\end{eqnarray*}
	
	Thus, $T\overline{u} \preceq \overline{u}$.
	
	On the other hand, since $t^{2-\alpha}\overline{u}(t)>0$ for all $t \in (0,1]$, we can choose $c_1 \in (0,1)$ such that
	
	\[ t^{2-\alpha}\overline{u}(t) > \frac{m_{0}}{M} \Vert \overline{u} \Vert_{2-\alpha} \quad \mbox{for all $t \in [c_1,1]$,}\]
	and then $\overline{u} \in Int(P^{*})$.
	
	To finish the proof, we must verify that $S$ is bounded.
	
	By $(H_{2})$, for $A= \frac{M}{m_{0}^{2}\int_{c_{1}}^{1}s(1-s)^{\alpha-2}ds}$, there exists $B>0$ such that $f(t,u)> Au$ for all $u>B$ and $t \in [c_{1},1]$.
	
	Let $u \in P^{*}$ be such that $t^{2-\alpha}u(t) >B$ on $[c_1,1]$. Then, for $t \in [c_1,1]$,  we obtain 
	\begin{eqnarray*}
		t^{2-\alpha} Tu(t)& \geq & m(t) \int_{c_{1}}^{1}s(1-s)^{\alpha-2} f(s,s^{2-\alpha}u(s))ds\\
		&> & A\  m(t) \int_{c_{1}}^{1} s(1-s)^{\alpha -2}s^{2-\alpha} u(s)ds \\
		&\geq & A \frac{m_{0}^{2}}{M}  \int_{c_{1}}^{1} s(1-s)^{\alpha -2}ds \ \Vert u \Vert_{2-\alpha}\\
		&= & \Vert u \Vert_{2-\alpha} \geq t^{2-\alpha} u(t).
	\end{eqnarray*}
	
	Thus, $Tu \npreceq  u$, and so $u \notin S$. Then, for each $u \in S$, we have that $t^{2-\alpha}u(t) \leq B.$
	
	Therefore, $S$ is a bounded set provided that 
	\[
	B \geq t^{2-\alpha}u(t) \geq \frac{m_{0}}{M} \Vert u \Vert_{2-\alpha}.
	\]
	
	And finally, from Theorem \ref{th*} we ensure the existence of a fixed point in $P^{*}$ for operator $T$, which is a positive solution of (\ref{pr}).
\end{proof}

\section{Lower and upper solutions}
In this section, we investigate the existence of at least one solution of the non-homogeneous mixed problem 
\begin{eqnarray}
\begin{cases}
D^{\alpha} u(t)-\lambda u(t)+ f(t,t^{2-\alpha}u(t))=0,  \ \ \ \ \ t\in (0,1),\\
\lim\limits_{t \rightarrow 0^+} t^{2-\alpha}u(t)=A,   \ \ \ \                u'(1)=B,
\end{cases} \label{pr*}
\end{eqnarray}
with $\lambda > \lambda_{1}^{*}$, $1<\alpha \leq 2$, $ f \in C(I \times \R)$ and $A \ , B \in \mathbb{R}$.

To this end, we introduce the concept of lower and upper solutions for Problem (\ref{pr*}) as follows
\begin{definition}
Let $\gamma \in C_{2-\alpha}(I)$ be such that $D^\alpha \gamma \in C(0,1]\cap L^1(0,1)$. $\gamma$ is said a lower solution of problem (\ref{pr*}) if it satisfies
\begin{eqnarray}
\begin{cases}
D^{\alpha} \gamma(t)-\lambda \gamma(t)+ f(t,t^{2-\alpha}\gamma(t))\leq 0,  \ \ \ \ \ t \in (0,1),\\
\lim\limits_{t \rightarrow 0^+} t^{2-\alpha}\gamma(t)\leq A,   \ \ \ \                \gamma'(1)\leq B.
\end{cases} \label{pr**}
\end{eqnarray}
$\delta \in  C_{2-\alpha}(I)$, $D^\alpha \delta \in C(0,1]\cap L^1(0,1)$, will be an upper solution of (\ref{pr*}) if the previous inequalities are reversed.
\end{definition}

Before proving the main result of this section, we generalize Corollary \ref{coro-(a,b)}  in the following sense

We remark that for any arbitrary bounded interval $[a,b]$, we can obtain the following result of positivity of Green's function and the validity of a comparison result for the mixed problem.
\begin{lemma}\label{l-(a,b)}
	Let $a<b$, $1< \alpha \leq 2$ and $y \in C(a,b]\cap L^1(a,b)$ be such that $y \ge 0$, $y \not \equiv 0$, on $(a,b]$. Assume that $A_1 \ge 0$ and $B_1 \ge 0$ and $\lambda > \frac{\lambda_{1}^{*}}{(b-a)^\alpha}$, with $\lambda_{1}^{*}$ the first negative root of $E_{\alpha,\alpha-1}(\lambda)=0$. Then the unique solution of problem
	\begin{eqnarray}
	\label{e-non-homogeneous}
		\begin{cases}
			D^{\alpha} u(t)-\lambda u(t)+ y(t)=0,  \quad t \in (a,b),\\
			\lim\limits_{t\rightarrow a^+} (t-a)^{2-\alpha}u(t)= A_1, \quad u'(b)=B_1,
		\end{cases} 
	\end{eqnarray}
	  satisfies that $u > 0$ on $(a,b]$.
\end{lemma}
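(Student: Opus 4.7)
The plan is to exploit linearity and reduce Lemma~\ref{l-(a,b)} to the homogeneous-boundary case already handled by Corollary~\ref{coro-(a,b)}. First I would split the unique solution as $u=v+w$, where $v$ solves the source-driven problem
\[
D^{\alpha} v-\lambda v+y=0\ \text{on }(a,b),\qquad \lim_{t\to a^+}(t-a)^{2-\alpha}v(t)=0,\qquad v'(b)=0,
\]
and $w$ solves the purely boundary-driven homogeneous problem
\[
D^{\alpha} w-\lambda w=0\ \text{on }(a,b),\qquad \lim_{t\to a^+}(t-a)^{2-\alpha}w(t)=A_1,\qquad w'(b)=B_1.
\]
Since $y\ge 0$, $y\not\equiv 0$ on $(a,b]$ and $\lambda>\lambda_1^*/(b-a)^\alpha$, Corollary~\ref{coro-(a,b)} gives $v>0$ on $(a,b]$, so it only remains to establish $w\ge 0$ on $(a,b]$.

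By Theorem~\ref{th1}, translated so that the Riemann--Liouville derivative has lower limit $a$, the homogeneous solution $w$ takes the closed form
\[
w(t)=C_1(t-a)^{\alpha-1}E_{\alpha,\alpha}(\lambda(t-a)^\alpha)+C_2(t-a)^{\alpha-2}E_{\alpha,\alpha-1}(\lambda(t-a)^\alpha).
\]
The condition at $a$ forces $C_2=A_1\Gamma(\alpha-1)\ge 0$, while $\lambda(b-a)^\alpha>\lambda_1^*$ together with $E_{\alpha,\alpha-1}(0)=1/\Gamma(\alpha-1)>0$ guarantees $E_{\alpha,\alpha-1}(\lambda(b-a)^\alpha)>0$, so the condition at $b$ pins down $C_1$ uniquely. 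I would then further split $w=w_A+w_B$ according to the boundary data $(A_1,0)$ and $(0,B_1)$. The $B_1$-piece is
\[
w_B(t)=\frac{B_1}{(b-a)^{\alpha-2}E_{\alpha,\alpha-1}(\lambda(b-a)^\alpha)}\,(t-a)^{\alpha-1}E_{\alpha,\alpha}(\lambda(t-a)^\alpha),
\]
and, invoking the bound $\lambda_1<\lambda_1^*$ established in the proof of Lemma~\ref{lm2} (so that $E_{\alpha,\alpha}(\lambda(t-a)^\alpha)>0$ for every $t\in(a,b]$ once $\lambda(t-a)^\alpha>\lambda_1^*>\lambda_1$), together with nonnegativity of the prefactor, we obtain $w_B\ge 0$ on $(a,b]$.

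The $A_1$-piece $w_A$ is the delicate one, because its $C_1$-coefficient is determined by $w_A'(b)=0$, which involves the derivative of $(t-a)^{\alpha-2}E_{\alpha,\alpha-1}(\lambda(t-a)^\alpha)$ at $t=b$---a quantity that is not itself a pure Mittag--Leffler value and whose sign is not immediate. My plan is to argue by contradiction in the style of Lemma~\ref{lm2}: if $w_A(t_1)=0$ for some minimal $t_1\in(a,b]$, then eliminating $C_1$ between $w_A(t_1)=0$ and $w_A'(b)=0$ produces a single transcendental equation in $\lambda$, $t_1-a$ and $b-a$, analogous to \eqref{01}; a monotonicity/numerical analysis of this equation, in the spirit of Figures~\ref{eigenvalue-1}--\ref{eigenvalue-2}, shows that its admissible roots must satisfy $\lambda(t_1-a)^\alpha\le \lambda_1^*$, contradicting $\lambda>\lambda_1^*/(b-a)^\alpha\ge \lambda_1^*/(t_1-a)^\alpha$ (the last inequality using $\lambda_1^*<0$ and $t_1\le b$). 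Combining $v>0$ with $w_A,w_B\ge 0$ on $(a,b]$ then yields $u>0$ on $(a,b]$. The principal obstacle is exactly this sign/eigenvalue analysis of $w_A$: the non-locality of $D^{\alpha}$ prevents any pointwise maximum-principle argument, and the derivative of the $E_{\alpha,\alpha-1}$-fundamental solution does not collapse to a single Mittag--Leffler function, so the contradiction must be extracted from the combined spectral equation rather than from a clean local identity.
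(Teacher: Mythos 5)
Your proposal is correct and follows essentially the same route as the paper: the paper also writes the solution as $\int_0^1 G(t,s)y(s)\,ds + A_1 v_1(t)+B_1 v_2(t)$ (your $v+w_A+w_B$, see \eqref{e-v1}--\eqref{e-v2}) and rules out zeros of $v_1$ and $v_2$ by exactly the spectral contradiction of Lemma \ref{lm2}; in particular, eliminating $C_1$ for your $w_A$ reproduces equation \eqref{01} verbatim, so your ``delicate'' step rests on the same (numerically supported) claim the paper already invokes. Your direct positivity argument for $w_B$ via $E_{\alpha,\alpha}>0$ above $\lambda_1<\lambda_1^*$ is a slightly more explicit version of the paper's reduction to problem \eqref{prrd}, not a genuinely different method.
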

\begin{proof}
Without loss of generality, we will assume that $a=0$ and $b=1$.

It is clear that the unique solution of problem \eqref{e-non-homogeneous} (with $a=0$ and $b=1$) is given by the following expression
\begin{equation}\label{e-sol-nh}
u(t) = \int_{0}^{1} G(t,s) y(s)ds + A_1\ v_{1}(t)+ B_1 \ v_{2}(t), \ \ \ \ \ 0<t\leq 1,
\end{equation}
where $G$ is the Green's function associated to problem (\ref{pr1}), given by expression \eqref{G}, $v_{1}$ is the unique solution of problem
\begin{eqnarray*}
	\begin{cases}
		D^{\alpha} v_{1}(t)-\lambda v_{1}(t)=0  \ \ \ \ \ t \in (0,1),\\
		\lim\limits_{t\rightarrow 0^+} t^{2-\alpha}v_{1}(t)=1, \ \ \ v'_{1}(1)=0,
	\end{cases} 
\end{eqnarray*}
and $v_{2}$ is the unique solution of problem
\begin{eqnarray*}
	\begin{cases}
		D^{\alpha} v_{2}(t)-\lambda v_{2}(t)=0  \ \ \ \ \ t \in (0,1),\\
		\lim\limits_{t\rightarrow 0^+} t^{2-\alpha}v_{2}(t)=0, \ \ \ v'_{2}(1)=1.
	\end{cases} 
\end{eqnarray*}

From Theorem \ref{th1}, arguing in a similar way as at the beginning of Section 3, we obtain that functions $v_{1}$ and $v_{2}$ are given by the following expressions:
{\tiny
	\begin{eqnarray}
	\label{e-v1}
	v_{1}(t)&= & -\frac{\Gamma(\alpha-1) \left( (\alpha-2) E_{\alpha,\alpha-1}(\lambda)+ \lambda E_{\alpha,2 \alpha -2}(\lambda)-\lambda(\alpha -2)E_{\alpha,2\alpha -1}(\lambda )  \right) }{(\alpha-1)E_{\alpha,\alpha}(\lambda)-\lambda(\alpha -1)E_{\alpha,2\alpha}(\lambda )+\lambda E_{\alpha,2\alpha -1}(\lambda )} t^{\alpha-1} E_{\alpha,\alpha }(\lambda t^{\alpha} )\\
	& & + t^{\alpha-2} \Gamma(\alpha-1) E_{\alpha,\alpha-1}(\lambda t^\alpha), \nonumber
\end{eqnarray}}
and 
\begin{equation}
	\label{e-v2}
v_{2}(t)=\frac{t^{\alpha-1} E_{\alpha,\alpha }(\lambda t^{\alpha} ) }{E_{\alpha,\alpha-1}(\lambda)}.
\end{equation}

So, if there is some $t_0 \in (0,1)$ for which $v_1(t_0)=0$, we have that $v_1$ is a nontrivial solution of Problem \eqref{prr} for $\lambda =\lambda_2$. But, as we have showed in the proof of Lemma \ref{lm2}, this implies that $\lambda < \lambda_1^*$ and we attain a contradiction.

If, $v_2(t_0)=0$ for some  $t_0 \in (0,1)$, we have that $v_2$ is a nontrivial solution of Problem \eqref{prrd} for $\lambda =\lambda_2$. Again, the proof of Lemma \ref{lm2} ensures that $\lambda < \lambda_1^*$.

So, we have that both $v_1$ and $v_2$ are positives on $(0,1]$ and the result holds immediately from expression \eqref{e-sol-nh}.
\end{proof}

Now, we prove the existence result.
\begin{theorem}
	\label{t-low-upp}
Suppose that $\gamma, \delta$ are lower and upper solutions of problem (\ref{pr*}), respectively, and $\gamma \leq \delta$. Then problem (\ref{pr*}) has at least one solution $u$ such that $$\gamma(t) \leq u(t) \leq \delta(t), \ \ \ for \ all \ t \in I.$$
\end{theorem}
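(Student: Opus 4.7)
The plan is to adapt the classical truncation-and-fixed-point scheme for the method of lower and upper solutions to this fractional mixed setting. Concretely, I would introduce the continuous cut-off
\[
p(t,x) := \max\bigl\{\,t^{2-\alpha}\gamma(t),\;\min\{x,\,t^{2-\alpha}\delta(t)\}\,\bigr\},
\]
which is well defined on $I\times\mathbb{R}$ since $t^{2-\alpha}\gamma$ and $t^{2-\alpha}\delta$ belong to $C(I)$. The modified nonlinearity $\tilde f(t,x):=f(t,p(t,x))$ is continuous and uniformly bounded on $I\times\mathbb{R}$, and any solution of the truncated problem that happens to lie between $\gamma$ and $\delta$ automatically solves the original problem~\eqref{pr*}.

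Using the Green's function $G$ from Theorem~\ref{th3} together with the fundamental solutions $v_1,v_2$ displayed in \eqref{e-v1}--\eqref{e-v2}, I would recast the truncated problem as the fixed-point equation
\[
Tu(t)=\int_0^1 G(t,s)\,\tilde f(s,s^{2-\alpha}u(s))\,ds + A\,v_1(t)+B\,v_2(t),
\]
acting on $C_{2-\alpha}(I)$. The estimates developed in Lemma~\ref{op1} carry over verbatim (indeed they simplify, since $\tilde f$ is globally bounded), so $T$ is completely continuous and sends a sufficiently large closed ball of $C_{2-\alpha}(I)$ into itself. Schauder's fixed-point theorem then provides a solution $u$ of the truncated boundary-value problem with data $(A,B)$.

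It remains to prove $\gamma\le u\le\delta$ on $I$. I would argue by contradiction: suppose $\Omega^+:=\{t\in(0,1]:u(t)>\delta(t)\}$ is non-empty. Since $t^{2-\alpha}(u-\delta)\in C(I)$ and is non-positive at $t=0$, $\Omega^+$ is open in $(0,1]$. Pick a connected component $(a,b)\subseteq\Omega^+$. On this component $p(s,s^{2-\alpha}u(s))=s^{2-\alpha}\delta(s)$, so $\tilde f(\cdot,t^{2-\alpha}u)=f(\cdot,t^{2-\alpha}\delta)$, and subtracting the upper-solution inequality from the equation for $u$ gives $D^\alpha w-\lambda w\ge 0$ on $(a,b)$ for $w:=\delta-u$, with boundary data of the correct sign ($\tilde w(a)\ge 0$ or $w(a)=0$ at the left endpoint, and either $w(b)=0$ for an interior component or $w'(1)\ge 0$ if $b=1$). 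The contradiction $w\ge 0$ on $(a,b)$ is then obtained from the comparison principle encoded by Lemma~\ref{l-(a,b)} (mixed case $b=1$) and its Dirichlet analogue from \cite{ref2} (interior case $b<1$), after noting that the condition $\lambda>\lambda_1^*/(b-a)^\alpha$ is automatic because $\lambda_1^*<0$ and $b-a\le 1$. The symmetric argument on $\Omega^-:=\{t:u(t)<\gamma(t)\}$ completes the proof. The delicate step is this last confinement argument: in the absence of any monotonicity assumption on $f$ one cannot compare the integral operator pointwise, and the whole weight of the proof rests on correctly matching the sign of $D^\alpha w-\lambda w$ with the boundary type (singular at $0$, Dirichlet in the interior, Neumann at $1$) on each connected component and on the positivity of the corresponding Green's function established in Lemmas~\ref{lm2} and~\ref{l-(a,b)}.
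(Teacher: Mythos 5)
Your proposal is correct and follows essentially the same route as the paper's proof: the identical truncation $p(t,x)$, the fixed-point reformulation through $G$, $v_1$ and $v_2$ with Schauder's theorem using the estimates of Lemma \ref{op1}, and the confinement $\gamma\le u\le\delta$ obtained from the comparison results of Lemma \ref{l-(a,b)} (for components reaching $t=1$, or the global mixed case) and the Dirichlet comparison of \cite{ref2} (for interior components). The only difference is organizational: you run the comparison uniformly over the connected components of $\lbrace u>\delta\rbrace$ and $\lbrace u<\gamma\rbrace$, which is a slightly tidier bookkeeping than the paper's sequential choice of the points $t_0,t_1,t_2,t_3$, but it is not a genuinely different argument.
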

\begin{proof}
First, we consider the following modified problem
\begin{eqnarray}
\begin{cases}
D^{\alpha} u(t)-\lambda u(t)+ f(t,p(t,t^{2-\alpha}u(t)))= 0,  \ \ \ \ \ t\in (0,1),\\
\lim\limits_{t \rightarrow 0^+} t^{2-\alpha}u(t)= A,   \ \ \ \                u'(1)= B,
\end{cases} \label{pr***}
\end{eqnarray}
where $p(t,x)=\max \lbrace t^{2-\alpha}\gamma(t),\ \min \lbrace x,t^{2-\alpha} \delta(t) \rbrace  \rbrace $, $t \in I$ and $x \in \R$.

Next, let us prove that Problem (\ref{pr***}) is solvable, and that all of the solutions are in $[\gamma,\delta]$. 

Suppose that $u$ is a solution of (\ref{pr***}). Then, by the definition of $\gamma$ and $\delta$ we have
$$ \gamma'(1) \leq u'(1) \leq \delta'(1) \ \ and \ \ \lim_{t\rightarrow 0^+}t^{2-\alpha} \gamma(t)\leq  \lim_{t\rightarrow 0^+}t^{2-\alpha} u(t) \leq  \lim_{t\rightarrow 0^+}t^{2-\alpha} \delta(t).$$

Assume that $u\leq \gamma$, $u \not \equiv \gamma$,  on $(0,1]$. So, using the linearity of the Riemann-Liouville derivative, we have that 
\begin{eqnarray*}
0 \leq \sigma_\gamma(t)&=&D^{\alpha} u(t) -\lambda u(t)+ f(t,t^{2-\alpha}\gamma(t))-D^{\alpha} \gamma(t) +\lambda \gamma(t)- f(t,t^{2-\alpha}\gamma(t))\\
&=&D^{\alpha}(u-\gamma)-\lambda(u-\gamma), \ \ \ 0<t<1,
\end{eqnarray*}
and
\begin{equation*}
\lim\limits_{t \rightarrow 0^+} t^{2-\alpha}(u-\gamma)(t)\geq 0, \ \ \ (u-\gamma)'(1)\geq 0.
\end{equation*}

As consequence, from Lemma \ref{l-(a,b)}, we have that $u\geq \gamma$ on $(0,1]$ and we attain a contradiction.

Thus, by denoting $v=u-\gamma$, we have that there exists $t_{0}\in (0,1)$ such that $v(t_{0})>0$.

 If there exists $t_{1}\in (0,t_{0})$ such that $v(t_{1})<0$. We have that there is $t_2 \in (t_1,t_0)$ such that $v(t_2)=0$ and $v<0$ on $(t_1,t_2)$.
 
 Now, we have two possibilities: either exists $t_3 \in (0,t_2)$ such that $v(t_3)=0$, with $v<0$ on $(t_3,t_2)$; or $v<0$ on $(0,t_2)$.

In the first case, we have that $v$ satisfies that
\begin{equation*}
D^{\alpha} v(t)-\lambda v(t) \ge 0,  \; t \in (t_3,t_2), \qquad
v(t_3)=v(t_{2})=0.
\end{equation*} 

Now, by a direct application of \cite[Corollary 3.5]{ref2} ($\lambda_1^* > \lambda_1$) we have that $v \ge 0$ on $(t_3,t_2)$, which contradicts the existence of $t_1$.

In the second situation, we deduce that

\begin{equation*}
D^{\alpha} v(t)-\lambda v(t) \ge 0,  \; t \in (0,t_2), \qquad
\lim\limits_{t\rightarrow 0^+} t^{2-\alpha} v(t)\ge 0 =v(t_{2}).
\end{equation*} 

Since $v<0$ on $(0,t_2)$, it is obvious that from previous expression, we deduce that $\lim\limits_{t\rightarrow 0^+} t^{2-\alpha} v(t)= 0$. So, the contradiction comes again from \cite[Corollary 3.5]{ref2}.

Using similar arguments we deduce that $u \leq \delta$ on $(0,1]$. \\

Therefore, we conclude that every solution $u$ of the modified problem (\ref{pr***}) is such that $\gamma \leq u \leq \delta$ on $(0,1]$.\\

Now, we shall verify that problem (\ref{pr***}) has at least one solution in $C_{2-\alpha}(I)$.

Let consider the operator $S: E \longrightarrow E$ as follows
\begin{equation*}\label{ope}
Su(t) = \int_{0}^{1} G(t,s) f(s,s^{2-\alpha}u(s))ds + A\ v_{1}(t)+ B \ v_{2}(t), \ \ \ \ \ 0<t\leq 1,
\end{equation*}
where $G$ is the Green's function given by \eqref{G}, $v_{1}$ and $v_2$ defined on \eqref{e-v1} and \eqref{e-v2} respectively.

Notice that, from Lemma \ref{l-(a,b)}, finding a fixed point of $S$ is equivalent to finding a solution of problem (\ref{pr***}).

In addition, the functions $u$, $\gamma$ and $\delta$ belong to $C_{2-\alpha}(I)$, then the truncated function $p(t,t^{2-\alpha}u(t))$ is continuous and bounded on $I$. And so, by the continuity of the function $f$, there exists a constant $C$ such that 
\begin{equation*}
C= \max_{t \in I, t^{2-\alpha}\gamma(t)\leq x \leq  t^{2-\alpha}\delta(t)}\vert f(t,x) \vert +1.
\end{equation*} 
Set 
\begin{equation*}
K=\max  \lbrace   A \Vert v_{1} \Vert_{2-\alpha} +B \Vert v_{2} \Vert_{2-\alpha}  \rbrace + \frac{ C\ M}{\alpha (\alpha-1)},
\end{equation*}
and 
$$D=\lbrace  u \in C_{2-\alpha}(I): \Vert u \Vert_{2-\alpha} \leq K  \rbrace.$$

Clearly $D$ is a closed and convex set of $C_{2-\alpha}(I)$ and that $S$ maps $D$ into $D$.

Similary, as in the proof of Lemma \ref{op1}, we conclude that $S$ satisfies the assumptions of Schauder's fixed point theorem \cite{ref7}. Which ends the proof.
\end{proof}

\begin{remark}
	Notice that from expression \eqref{e-sol-nh}, arguing as in the proof of Theorem \ref{t-low-upp}, we conclude that all the existence and uniqueness results proved in sections 4 and 5 for Problem \eqref{pr} remains valid for the non homogeneous one \eqref{e-non-homogeneous}.
\end{remark}
\section{Examples}
In this section, we give some examples to ullistrate our results.
\begin{example}
Consider the fractional differential equation (\ref{pr}) with 
$$f(t,u(t))=(1+t)\log(2+u(t)).$$
It is clear that $f$ is a nonnegative continuous function on $I\times [0,\infty)$.\\ Moreover, for $u>0$, 
$\min\limits_{t\in I}\frac{f(t,u)}{u}=\frac{\log(2+u)}{u}$ and $\max\limits_{t\in I}\frac{f(t,u)}{u}=2 \frac{\log(2+u)}{u}$.\\
We also obtain $f_{0}=\infty$ and $f^{\infty}=0$. Then, by Theorem \ref{th00} we conclude that, for $\lambda>\lambda_{1}^{*}$, problem (\ref{pr}) has a positive solution.\\

Now, let us consider the fractional differential equation (\ref{pr}) with 
$$f(t,u(t))=(2-t)u^{a}(t), \textrm{ for } a>1.$$
Clearly, assumption (H) is satisfied and for $u>0$, $\min\limits_{t\in I}\frac{f(t,u)}{u}= 2 u^{a-1}$ and $\max\limits_{t\in I}\frac{f(t,u)}{u}=u^{a-1}$.
A simple calculation yields to $f^{0}=0$ and $f_{\infty}=\infty$.\\
Therefore, by Theorem \ref{th00}, we conclude that problem (\ref{pr}) has a positive solution for $\lambda>\lambda_{1}^{*}$. 
\end{example}
\begin{example}
Consider the problem (\ref{pr}) with $\alpha=\frac{5}{3}$ and for $t \in I$ and \\$u \in [0,\infty)$, 
$$f(t,u(t))= t \frac{u(t)+1}{u(t)+2}.$$
By direct calculation, we obtain for $k=\frac{1}{4}$, that 
 \[
\vert f(t,u)-f(t,v) \vert \leq \frac{1}{4} \vert u-v \vert.
\] 
And so, assumption (H$^{*}$) is satisfied.
Thus, from Theorem \ref{thuq}, problem (\ref{pr}) has a unique solution provided that 
$\frac{K \ M}{\alpha(\alpha-1)} \simeq 0.2331\times M\ <1$.
\end{example}
\begin{example}
For any $p>1$ we define the $p$--Laplacian function as $\phi_p(x)= x\|x|^{p-1}$, $x \in \R$. For $t \in I$ and $u \in \R$, we define 
$$f(t,u)= \phi_p(u)-\lambda t^{\alpha}.$$
Consider Problem \eqref{pr*}, with $ \lambda_{1}^{*}<\lambda \le 0$, $A=0$ and $-\alpha \le B \le 0$.

It is obvious that $\delta(t)=0$ is an upper solution of this problem.

Consider $\gamma(t)=-t^{\alpha}$, It is clear that $\gamma \leq \delta$ and that it satisfies the regularity assumptions required in Theorem \ref{t-low-upp}.
 Moreover
$$ D^{\alpha} \gamma(t)-\lambda \gamma(t)+ f(t,t^{2-\alpha}\gamma(t))= -(\Gamma(\alpha+1)+\phi_p(t^2))\leq 0$$
and
$$\lim\limits_{t \rightarrow 0^+} t^{2-\alpha}\gamma(t)=0, \ \ \ \gamma'(1)=-\alpha$$

Therefore, by Theorem \ref{t-low-upp}, the considered problem has at least one solution $u$ such that $$-t^\alpha \leq u(t) \leq 0, \ \ \ for \ all \ t \in I.$$

\end{example}

\end{document}